\newtheorem{theorem}{Theorem}[section]
\newtheorem{lemma}[theorem]{Lemma}
\theoremstyle{definition}
\newtheorem{definition}[theorem]{Definition}
\theoremstyle{remark}
\newtheorem{remark}[theorem]{Remark}
\theoremstyle{assumption}
\newtheorem{assumption}[theorem]{Assumption}
\theoremstyle{fact}
\numberwithin{equation}{section}
\DeclareMathOperator*{\maximize}{maximize}
\DeclareMathOperator*{\minimize}{minimize}
\DeclareMathOperator{\argmax}{arg\,max}
\DeclareMathOperator{\sbjto}{subject\;to}
\newcommand{\abs}[1]{\left\lvert{#1}\right\rvert} %absolute value
\newcommand{\pmat}[1]{\begin{pmatrix}#1\end{pmatrix}}
\DeclareMathOperator{\rank}{rank}
\newcommand{\Real}{\mathbb{R}} %real numbers
\newcommand{\Nz}{\mathbb{N}} %real numbers
\newcommand{\N}{\Nz^\star} %natural numbers
\newcommand{\Let}{\coloneqq}
\newcommand{\st}{x}
\renewcommand{\ge}{\geqslant}
\newcommand{\subfun}{\Xi}
\newcommand{\sys}{A}
\title[Efficient constrained sensor placement for observability]{Efficient constrained sensor placement for observability of linear systems}
\author[P. Dey]{Priyanka Dey}
\address{Systems \& Control Engineering, Indian Institute of Technology Bombay, Powai, Mumbai~400076, India.}
\email{dey\_priyanka@sc.iitb.ac.in}
\author[N. Balachandran]{Niranjan Balachandran}
\address{Department of Mathematics, Indian Institute of Technology Bombay, Powai, Mumbai 400076, India.}
\email{niranj@math.iitb.ac.in}
\author[D. Chatterjee]{Debasish Chatterjee}
\address{Systems \& Control Engineering, Indian Institute of Technology Bombay, Powai, Mumbai 400076, India.}
\email{dchatter@iitb.ac.in}
\begin{document}
\keywords{structural observability, graph theory, matching, submodular functions.}

\begin{abstract}
This article studies two problems related to observability and efficient constrained sensor placement in linear time-invariant discrete-time systems with partial state observations:

	\noindent (i) We impose the condition that both the set of outputs and the state that each output can measure are pre-specified. We establish that for any fixed \(k > 2\), the problem of placing the minimum number of sensors/outputs required to ensure that the structural observability index is at most \(k\), is NP-complete. Conversely, we identify a subclass of systems whose structures are directed trees with self-loops at every state vertex, for which the problem can be solved in linear time.
 
\noindent (ii) Assuming that the set of states that each given output can measure is given, we prove that the problem of selecting a pre-assigned number of sensors in order to maximize the number of states of the system that are structurally observable is also NP-hard. As an application, we identify suitable conditions on the system structure under which there exists an efficient greedy strategy, which we provide, to obtain a \((1-\frac{1}{e})\)-approximate solution. An illustration of the techniques developed for this problem is given on the benchmark IEEE 118-bus power network containing roughly \(400\) states in its linearized model.
\end{abstract}

\maketitle
%%%%%%%%%%%%%%%%%%%%%%%%%%%%%%%%%%%%%%%%%%%%%%%%%%%%%%%%%%%%%%%%%%%%%%%%%%%%%%%%
\section{Introduction}
\label{s:intro}
%Controllability and observability  have been regarded as important properties of controlled dynamical systems since the seminal work of Kalman \cite{ref:RKal-60}. In the past few decades, due to the ever-increasing size and complexity of the dynamical systems around us, various aspects of the analysis of controllability and observability of large-scale systems have gained prominence. Examples include analysing the dynamics of  power grids that span countries, understanding the chemical pathways of complex reactions in the human brain, understanding the structure and patterns in large social networks that influence major political decisions, etc \cite{ref:AHer-13}, \cite{ref:YKat-11}, \cite{ref:NewBarDun-06}.
%ref:PeqKhaPap-16,

The ever-increasing demand for low-cost control and quick reconstruction of past states from the observations for large-scale systems has brought to the foreground the problem of identifying a subset of the states with fewest elements that are required to ``efficiently'' control or observe these systems assuming exact knowledge of the system parameters. This problem may look deceptively simple, but it is a computationally difficult one. Indeed, \cite{ref:AOls-14} proves that finding the smallest number of actuators (resp. sensors) to make a linear system controllable (resp. observable) is NP-hard; an analogous statement holds for observability. 

Due to the sheer size and the ubiquitous modeling uncertainties of these systems, it is difficult to accurately survey the system parameters that govern their dynamics. Moreover, the parameters are prone to drift over time due to ageing, structural alterations, etc. Therefore, it is important to control large scale systems with the knowledge of only the interconnections among the various states of the dynamical system. This is still possible by using tools from structural systems theory that relies on the zero-nonzero pattern of the system matrices of a linear system, providing a fundamental bedrock on which conventional control theory may be enabled. Such an approach often leads to robust design procedures, especially in system with uncertain or unknown parameters, as in the case of power or biological networks.  A survey on various optimization problems studied via structural system theory can be found in \cite{ref:LiuBarAlb-16}. In view of structural observabilty, a collection of interesting problems have been recently addressed in \cite{ref:ComDioTri-08}, \cite{ref:DooKha-13}, \cite{ref:DooRabZarKha-17} and the references therein. These works mainly focus  on optimal placement of sensors to ensure structural observability via efficient polynomial time algorithms and classification of sensors according to the influence of their failure on observability. %A survey on various optimization problems studied via structural system theory can be found in \cite{ref:LiuBarAlb-16}.% In the context of structural observabilty, a collections of interesting problems have been recently addressed in \cite{ref:ComDioTri-08}, \cite{ref:DooKha-13}, \cite{ref:DooRabZarKha-17} and the reference therein. These works mainly focus  on optimal placement of sensors to ensure structural observability via efficient polynomial time algorithms and classification of sensors according to the influence of their failure on observability.
 %Throughout this article \emph{we assume that the output matrix and the set of states that each output can measure are known a priori}. %

Since a significant fraction of real world systems admit only partial state observations, one of the central problems in systems theory is the efficient recovery of the actual system states from the observations. Moreover, for many networks it is of practical importance to recover the states quickly and efficiently within a small time window. Thus, it is crucial to understand \textit{how quickly} the states can be recovered from the observations of a discrete-time linear system. The \emph{observability index} characterises this speed of recovery by determining the minimum number of iterations required to fully reconstruct the states of a discrete-time linear system. Due to inevitable system uncertainties, we focus on the structural counterpart of the observability index, namely, the structural observability index. We address the following problem:
\vspace*{-0.15cm}
 \begin{quote} \textit{Minimal Sensor Placement Problem:} Determining the minimal number and placement of sensors/outputs  required to guarantee a desired bound on the structural observability index when the given output matrix has a rectangular diagonal structure, i.e., the state to which each output is directly connected is
pre-specified.\footnote{Note that the given output matrix can also be a diagonal structured matrix.}\end{quote} 
Let \(d\) be the number of states in the linear system. %The above problem can also be framed as the problem of identifying the minimum number of states to be measured by distinct outputs for a specific binary cost structure \(c\in \lbrace 1, \infty \rbrace^{d}\) provided to the \(d\) outputs. 
Corresponding to this problem, we have the following results:
\vspace*{-0.2cm}
\begin{itemize}[label=\(\circ\), leftmargin=*]
\item We provide an algorithm with run time complexity \(O(d^3 \log d)\) to determine a solution of the minimal sensor placement problem when the desired bound on the structural observability index is equal to \(2\). 
\item We prove that, in the general set up, the minimal sensor placement problem is NP-hard whenever the desired bound on the structural observability index is at least \(3\), thereby illustrating a sharp transition in the hardness of the problem as the bound changes from 2 to 3. 
\end{itemize}
Since the general problem turn out to be difficult, we identify conditions under which the minimal sensor placement problem is polynomially solvable. We consider a practically relevant special class of systems whose structure is a directed tree with a self-loop at every state.\footnote{See \S\ref{s:Preliminaries} for a formal definition of the directed tree. This particular structure plays an important role for a large class of systems including leader-follower networks \cite{ref:PeqKruKarMouPed-13}, \cite{ref:KruPeqKarMouAgu-18}, biological networks \cite{ref:AkuHayChiNg-07}, transportation systems \cite{ref:ZheLiBorHed-17}, \cite{ref:WanSyeYinPanZha-14}, etc. Furthermore, in the multi-agent community, several well-known strong results \cite{ref:RenBea-05}, \cite{ref:RenBeaAtk-07} rely on the existence of spanning trees in the network, thereby conforming to this category of structural hypotheses.} We establish the following result for this subclass:%\cite{ref:KlaMatBla-11}Consider a leader-follower network, where a group of agents cooperate and update their individual states, but each agent shares their state with a particular leader. Every vertex in the communication network has a self loop since each agent communicates with itself. Moreover, each possible communication link has an associated cost depending on various factors. Hence, it is desirable to identify a set of links such that the underlying graph is a minimum cost spanning forest, which is done in many recent works \cite{ref:PeqKruKarMouPed-13}, \cite{ref:KruPeqKarMouAgu-18}.The next relevant question of interest is to select a minimal set of leaders/sensors to be probed to reconstruct the entire states as soon as possible when the underlying system’s graph structure is a directed tree.%(in the dimension of states and outputs).\footnote{See the introduction of for further motivation.}investigate the complexity of the minimal sensor placement problem  and
\vspace*{-0.15cm}
\begin{itemize}[label=\(\circ\), leftmargin=*]
\item The minimal sensor placement problem is solvable in polynomial time when the underlying graph structure of the system is a directed tree with a self-loop at every state and we give an \(O(d)\) algorithm to solve it.
\end{itemize}
\vspace*{-0.1cm}

We move to the second problem addressed in this article:\vspace*{-0.15cm} \begin{quote} \textit{Cardinality Constrained Sensor Placement Problem:} Identifying a pre-assigned number of outputs from the given set of outputs so that maximum number of states are \emph{structurally observable by them}\footnote{See \S\ref{s:cardinality constrained sensor placement} for a formal definition.}  in the system when the set of states that  each output can measure is pre-specified. \end{quote} It becomes extremely relevant when the permissible number of sensors/outputs may not be adequate to observe the entire system. Therefore, a design strategy to select the sensors/outputs in such a way that a maximal number of states are observable in the system is needed. Corresponding to this problem, we have the following result:
\vspace*{-0.15cm}
\begin{itemize}[label=\(\circ\), leftmargin=*]
 \item We establish that the cardinality constrained sensor placement problem is NP-hard. We observe that the problem remains NP-hard even if we impose a mild condition on the system where the digraph associated with the state matrix \(A\) is such that each state vertex has a self-loop.\footnote{The systems satisfying this mild condition that every state vertex has a self-loop are often referred as \emph{self-damped systems} in the literature.}%by using the maximum cover problem
\end{itemize} 
\vspace*{-0.1cm}
We confine ourselves to this special class where each state vertex has a self-loop since a wide class of systems exhibit this self-damped dynamics \cite{ref:ChaMes-13}, for example, epidemic spread in networks \cite{ref:NowPrePap-16}, ecosystems \cite{ref:May-01}, power grids \cite{ref:IliXieKhaMou-10}, and even social networks \cite{ref:PeqKarAgu-14}. For this class of systems, we give the following result:%the problem is approached via submodularity tools and \cite{ref:May-72}
\vspace*{-0.15cm}
\begin{itemize}[label=\(\circ\), leftmargin=*]
\item We provide a greedy algorithm to obtain a \((1-1/e)\)-approximate solution for the cardinality constrained sensor placement problem. This is the \emph{best possible result} that can be obtained via greedy algorithms and at the level of generality considered here.
\end{itemize} % and use greedy algorithm to provide a solution to  cardinality constrained sensor placement problem with optimality guarantees.% \footnote{The advantage of greedy algorithms is that they provide a suboptimal solutions of our hard combinatorial optimization problems in reasonable times.} The cardinality constrained sensor placement problem is approached via submodularity tools. prove that the maximum size of a set of states that are structurally observable by a given set of outputs in a linear system is a submodular function, and

The rest of this article unfolds as follows. \S\ref{s:related work} discusses a few existing results and related work in this area. \S\ref{s:Preliminaries} reviews certain useful concepts and results. The precise problem statements of the minimal sensor placement problem and the cardinality constrained sensor placement problem, and our corresponding main results are given in \S\ref{s:Minimal sensor placement problem} and \S\ref{s:cardinality constrained sensor placement} respectively. We conclude a summary of our results  along with possible future directions in \S\ref{s:conclusion}.
\section{Related work} 
\label{s:related work}
The definition of structural observability index was introduced in \cite{ref:HMor-82}, and a few methods required for its computation were proposed in \cite{ref:SueDau-97}. By employing graph-theoretic techniques bounds on the (controllability and) observability index for structured linear systems were provided in \cite{ref:SunHad-13}. \cite{ref:SerAlbGeo-17} considered the problem of identifying the minimum number of states to be connected to distinct inputs (resp. outputs) to ensure a given bound on the structural controllability (resp.\ observability) index, and established that the problem is NP-hard. In addition, the trade-off between the structural controllability index and the minimum number of states that need to be actuated was explored on a variety of artificial and synthetic networks by using a heuristic algorithm  and it was observed that the number of actuated states obtained is close to optimal. The problem we address is a generalization of the problem considered in \cite{ref:SerAlbGeo-17}: the selection of states to be measured by distinct outputs 
%to guarantee a desired structural observability index 
is \emph{constrained} to a specific preassigned family of states. This restriction makes the problem more realistic and increases the level of its difficulty; %Hence, the problem studied in \cite{ref:SerAlbGeo-17}, where no constraints on the selection of states were imposed, is a special instance of the minimal sensor placement problem dealt with in this article; 
see Remark \ref{r:NP-hardp1} for further technical details. In addition, we identify a practically relevant subclass of systems for which the minimal sensor placement problem is \emph{optimally} solvable, viz., systems whose structures are directed trees with a self-loop at each vertex.% Here, our results are strong and of greater relevance in practical applications such as power networks and multi-agent systems, compared to the existing results.} % the  Hence, it differs from \cite{ref:SerAlbGeo-17} where no constraints on selection of states is considered}% since no constraints on placement of outputs was studied there. In both the problems each output is directly connected to only one state. In this article the selection of the outputs is restricted to a specific given set of outputs.}% In addition, the algorithm proposed in \cite{ref:SerAlbGeo-17} is useful to provide insights but is not sufficient to solve Problem 1.}  %Nonetheless, all these works are still in their infancy and further investigation is needed..\cite{ref:Roc-14}.this article differs from the problem dealt in \cite{ref:SerAlbGeo-17} as the selection of the outputs is restricted to a specific given set of outputs.\textcolor{blue}{The authors of \cite{ref:DaiGuLinHuaLau-15} considered the problem of designing an input matrix with minimum number of inputs so as to bound the structural controllability index. Here each input can influence more than one state. Firstly, a collection of disjoint directed paths and cycles that spans the digraph associated with system matrix \(A\) was found via a maximum matching. Secondly, given a bound on the structural controllability index and these paths and cycles, an algorithm to find the number of inputs needed was given along with the claim that the number of inputs obtained is at most three times than that of optimal. However, the computation of the number of inputs in an optimal solution was based on the initial choice of the paths and cycles which relies on the choice of the maximum matching. Since a digraph can have exponential number of maximum matchings, the number of inputs in an optimal solution and erstwhile in the approximate solution obtained in \cite{ref:DaiGuLinHuaLau-15} could be quite far from that of a solution. \textcolor{blue}{Recently to ensure a desired bound on the structural observability (controllability) index of a linear system a necessary and sufficient graph-theoretic condition is derived in \cite{ref:SerAlbGeo-17}}.\cite{ref:Roc-14}Recently to ensure a desired bound on the structural observability (controllability) index of a linear system a necessary and sufficient graph-theoretic condition is derived in \cite{ref:SerAlbGeo-17}.

In control theory, several problems involving selection of a pre-specified/minimum number of states (or inputs/outputs) to optimize a certain objective function have been studied via submodularity tools and can be found in \cite{ref:ClaAloBusPoo-16}. In particular, the problem of identifying a pre-assigned number of states to be actuated in order to optimize some of the energy metrics, such as the trace of the controllability Grammian, the log of determinant of the controllability Grammian, etc., was investigated in  \cite{ref:SumCorLyg-16} by using the notion of submodularity. Another work where the problem of selecting the minimum number of sensors so as to optimize the Kalman filter with respect to the estimation error in a linear time-invariant system corrupt with process and measurement noise was studied employing submodularity in \cite{ref:TzoJadPapp-16}. In the context of our problem, a relaxed version of the cardinality constrained placement problem (in the controllability framework) where each input (resp. output) is directly connected to only one state was dealt in \cite{ref:ClaBusPoo-12} via submodularity when the digraph associated with the state matrix \(A\) is strongly connected.\footnote{A digraph \(G=(V, E)\) is \emph{strongly connected} if for each ordered pair of vertices \((x_i, x_j)\), \(G\) has a directed path from \(x_i\) to \(x_j\).} However, they do not comment on the complexity of this problem. In contrast to \cite{ref:ClaBusPoo-12}, we address this problem in a different subclass where it is difficult to solve and there is no restriction on the given outputs to measure only one state.
\section{Preliminaries}
\label{s:Preliminaries}
The notations used in this article are standard: The set of real numbers, non-negative integers, and positive integers are denoted by \(\Real\), \(\mathbb{N}\), and \(\N\) respectively. Let \([r]\Let\lbrace 1,2,\ldots, r\rbrace\) for  each \(r\in \N\). The cardinality of set \(X\) is denoted by \(\abs{X}\). For sets \(X\) and \(Y\), \(X \setminus Y\) represents the elements belonging to \(X\) and not to \(Y\). For a matrix \(A\) of appropriate dimension, \(A_{ij}\) or \([A]_{ij}\) represents the \((i,j)\)-entry of this matrix.% entry located at the \(i\)th row and the \(j\)th column.
%\subsection{System description}
%\label{System description}

Consider a linear time-invariant system
%\vspace*{-0.1cm}
\begin{equation}
\begin{aligned}
	\label{e:linsys}
	\st(t+1)&= \bar{\sys} \st(t),\,\, y(t)= \bar{C} \st(t), \quad t\in \mathbb{N},  
\end{aligned}
\end{equation}
where \(\st(t) \in \Real^{d}\) and \(y(t)\in \Real^{p}\) are the state and output vector at time \(t\). The state and output matrices are given by \(\bar{\sys} \in \Real^{d \times d}\) and \(\bar{C}\in \Real^{p\times d}\) respectively. In this article, the system \eqref{e:linsys} is sometimes described by the pair \((\bar{A},\bar{C})\). In our analysis only the information about the locations of the fixed zeros in \(\bar{A}\) and \(\bar{C}\) is crucial and the precise numerical values of the non-entries of \(\bar{A}\) and \(\bar{C}\) are not relevant. For any matrix \(H\), its \emph{sparsity matrix} is defined as a matrix of the same dimension as \(H\) with either a zero or an independent free parameter (denoted by \(\ast\)) at each entry depending on whether the corresponding entry in \(H\) is zero or not. A \emph{numerical realisation} of a sparsity matrix is obtained by giving numerical values to its star entries. Let the sparsity matrices of the state and the output matrices in \eqref{e:linsys} are represented by \(A\in \lbrace 0,\ast \rbrace^{d \times d}\) and \(C\in \lbrace 0,\ast \rbrace^{p \times d}\) , and let \([A]\) and \([C]\) be the collection of all numerical matrices of the same dimension and structure/sparsity as \(A\in \lbrace 0,\ast \rbrace^{d \times d}\) and \(C\in \lbrace 0,\ast \rbrace^{p \times d}\) respectively. We say that a pair \((A, C)\) is \emph{structurally observable} if there exists at least one observable numerical realization of \((A, C)\).\footnote{It is known that if one realization of \((A, C)\) is observable, then \emph{almost all} numerical realizations of \((A, C)\) are observable; see \cite{ref:Lin-74}.}
%\vspace*{-0.1cm}
%\begin{defn}
%\label{d:strcobser}
%A pair \((A, C)\) is said to be \emph{structurally observable} if there exists at least one numerical realization \((A_1, C_1)\) of \((A, C)\) such that \((A_1, C_1)\) is observable.\footnote{It is known that if a pair \((A, C)\) is structurally observable, then \emph{almost all} numerical realizations of \((A, C)\) are observable; see \cite{ref:Lin-74}.}% \cite{ref:DioComWou-03}and \cite{ref:Rei-88}
%\end{defn} 

A linear time-invariant system \eqref{e:linsys} is associated with a digraph \(G(A,C)\) by using the following natural way: Let \(\mathcal{A}=\lbrace \st_1, \st_2, \ldots,  \st_d\rbrace\) and \(\mathcal{C}=\lbrace 1, 2, \ldots, p\rbrace\) be the state vertices and the output vertices corresponding to the states \(\st(t)\in \Real^d\) and the outputs \(y(t) \in \Real^p\) of the system \eqref{e:linsys}. Let \( E_{A}=\lbrace (\st_j, \st_i)\,|\, {A}_{ij}\neq 0\rbrace\) and \(E_{C}=\lbrace (\st_j, i)\,|\, C_{ij}\neq 0\rbrace\). The digraph \(G(A,C)=(\mathcal{A}\sqcup \mathcal{C}, E_{A} \sqcup E_{C})\), and \(\sqcup\) denotes the disjoint union. The sets \(E_{A}\) and \(E_{C}\) denote the edges between the state vertices, and the edges from the state vertices to the output vertices in the digraph \(G(A,C)\) respectively. Similarly, we can define digraph \(G(A)= (\mathcal{A}, E_{A})\) with vertex set \(\mathcal{A}\) and edge set \(E_{A}\). In \(G(A,C)\), the \emph{induced subgraph} by \(U\subset\mathcal{A} \sqcup \mathcal{C}\) is a digraph consisting of vertex set \(U\) and all those edges of the digraph \(G(A, C)\) with both end points in \(U\). In particular, \(G(A)\) is the induced subgraph of \(G(A,C)\) by \(\mathcal{A}\). %Note that the sparsity matrix of \(A\) and \(C\) are utilized to obtain the digraph \(G(A,C)\)., i.e., considering only the edges between the state vertices.\(U \subset\mathcal{A} \sqcup \mathcal{C}\)

A sequence of edges \(\lbrace(\st_1, \st_2), (\st_2, \st_3), \ldots, (\st_{k-1}, \st_k)\rbrace\), where each \(\st_i \in \mathcal{A}\) is distinct and \((\st_i, \st_j)\in E_{A}\), is called a \emph{directed path} from \(\st_1\) to \(\st_k\) in \(G(A)\). A \emph{cycle} is a directed path where the initial vertex \(\st_1\) coincides with the end vertex \(\st_k\). A digraph is acyclic if it contains no cycles. A digraph is a \emph{directed tree towards \(\st\)} if it is an acyclic graph where every vertex has a directed path towards \(\st\) and every vertex except \(\st\) has out-degree exactly equal to \(1\). Sometimes we refer to this digraph as just directed tree. The vertices with no incoming edges are termed as the \emph{leaves} of the tree. The digraph \(G(A, C)\) is said to have a \emph{spanning forest topped} at output vertices \(\mathcal{C}\) if it has a disjoint union of set of directed trees,  where each tree is directed towards a vertex in \(\mathcal{C}\) and the union contains all the state vertices.
\vspace*{-0.1cm}
\begin{definition} 
For the digraph \(G(A,C)\) associated with system \eqref{e:linsys} and
a subset \(S \subset \mathcal{A}\), the \emph{out-neighbourhood} of \(S\) is the set \(N^{+}(S)=\big\lbrace v\, \big|\, (\st_i, v) \in E_{A}\sqcup E_{C}, \st_i \in S, v\in \mathcal{A} \sqcup \mathcal{C}\big \rbrace\). The digraph \(G(A,C)\) is said to have a \emph{contraction} if  there exists a set \(S \subset \mathcal{A}\) with \(\abs{N^{+}(S)} < \abs{S}\).
\end{definition} %This is precisely \emph{Hall's condition}.

The following subgraphs associated with digraph \(G(A)\) and \(G(A, C)\) are defined in \cite{ref:PeqKarAgui-16}.

\begin{itemize}[label=\(\circ\), leftmargin=*]
\item State stem is a directed path, consists of only state vertices. An isolated state vertex is also considered as state stem. \footnote{The tip of a state stem is a state vertex that does not have any outgoing  edges from it to any other state vertex in the stem.}
\item Output Stem is a directed path obtained by connecting a directed edge from the tip of a state stem to an output vertex. 
%\item A State Cactus is defined recursively as follows: A state stem is a state cactus. A state cactus connected by a directed edge from a state vertex of a disjoint cycle (comprising of only state vertices) is also a state cactus.%except the initial vertex of the state stem,
\item An Output Cactus, defined recursively as follows: An output stem with at least one state vertex is an output cactus.  An output cactus connected by a directed edge from a state vertex of a disjoint cycle (comprising of only state vertices) to either any state vertex or the output vertex of the cactus is also an output cactus.%(except the initial state vertex of the output stem)
\end{itemize}
%We say that \(G(A, C)\) is spanned by a disjoint union of output cacti if there exists a set of disjoint output cacti that contains all state vertices \cite{ref:LiuBarAlb-16}.
The relation between certain properties of \(G(A, C)\) and the structural observability of the pair \((A,C)\) is given by:%the system theoretic property of

\begin{theorem}\cite{ref:Lin-74}\cite{ref:DooKha-13}
\label{t:observability}
The following are equivalent:
\begin{enumerate}[label={\rm (\alph*)}, widest=b, leftmargin=*, align=left]
\item The pair \((A,C)\) is structurally observable.%\footnote{A pair \((\sys, C)\) is \emph{structurally observable} if there exists a observable pair \((\sys_1, C_1)\)  of the same dimension and structure arbitarily close to the pair \((\sys, C)\); see \cite{ref:Lin-74} and \cite{ref:DioComWou-03}.}
\item In the digraph \(G(A,C)\) derived from \eqref{e:linsys}, every state vertex \(\st_i\) has a directed path from it to at least one of the output vertices, and \(G(A,C)\) is free of contractions.
\item The digraph \(G(A, C)\) is spanned by a disjoint union of output cacti.
\end{enumerate}
\end{theorem}%\cite{ref:May-81}
\vspace*{-0.3 cm}
 \subsection{Structural observability index} 

 We catalogue some notions specific to structural observability index. The \emph{observability index} \(\mu(\bar{A},\bar{C})\) of \eqref{e:linsys} is \footnote{The convention that \(\inf\emptyset=+\infty\) is assumed to be in place.}
\begin{equation*}
\label{e:obsind}
\mu (\bar{A},\bar{C})\Let\inf\Big\lbrace k \in [d]\, \Big| \, \rank\pmat{\bar{C}^{\top} & (\bar{C}\bar{A})^{\top} & \cdots & (\bar{C}\bar{A}^{k-1})^{\top} }^{\top}=d\Big\rbrace.
\end{equation*}
In other words, \(\mu(\bar{A},\bar{C})\) is the minimum number of iterations required to recover/determine uniquely the initial state \(x_0\) from \(y_0\), \(y_1,\ldots\). In other words, \(x_0\) may be obtained by left-inversion in the linear equation
\[\begin{pmatrix}
\bar{C}\\
\bar{C}\bar{A}\\
\vdots\\
\bar{C}\bar{A}^{\mu(\bar{A},\bar{C})-1}
\end{pmatrix}x_0=\begin{pmatrix}
y_0\\
y_1\\
\vdots\\
y_{\mu(\bar{A},\bar{C})-1}
\end{pmatrix}.
\]
The \(k\)-step observability matrix associated with the pair \((\bar{A},\bar{C})\) is given by \(O_k(\bar{A},\bar{C})\Let\begin{pmatrix}
\bar{C}^{\top} & (\bar{C}\bar{A})^{\top} & \ldots & (\bar{C}\bar{A}^{k-1})^{\top}
\end{pmatrix}^{\top}\)
%\begin{equation}
%\label{e:partial}
%O_k(\bar{A},\bar{C})\Let\begin{pmatrix}
%\bar{C}^{\top} & (\bar{C}\bar{A})^{\top} & \ldots & (\bar{C}\bar{A}^{k-1})^{\top}
%\end{pmatrix}.
%\end{equation}
The structural counterpart of the observability index, namely, \emph{structural observability index}, is defined as% as defined in \eqref{e:obsind}

\begin{equation}
\mu(A,C)\Let\inf\Biggl\lbrace k\in [d] \,\Bigg| \,\sup_{\substack{A_1 \in [A]\\C_1 \in [C]}}
\rank\pmat{O_k(A_1, C_1)}=d\Biggr\rbrace.
\end{equation}
The value of the infimum is \(+\infty\) when none of the pairs in \(([A],[C])\) is observable, i.e., the pair \((A, C)\) is structurally unobservable. If the pair \((A, C)\) has structural observability index \(\mu(A, C)=\ell\), where \(\ell\leq d\) is some positive integer, then almost all numerical realisations of pair \((A,C)\) with the same structure as \((A, C)\) have observability index \(\ell\) off a manifold with zero Lebesgue measure \cite[p.~44]{ref:Rei-88}. The following result provides a graph theoretic interpretation of the structural observability index of a pair \((A, C)\).
\vspace*{-0.1 cm}
\begin{theorem}\cite[Theorem 2]{ref:SerAlbGeo-17} 
\label{t:observabilityindex}
A pair \((A, C)\) is structurally observable with index \(\ell\) if and only if  the digraph \(G(A, C)\) is spanned by a disjoint union of output cacti, where every output cactus contains at most \(\ell\) state vertices.  
\end{theorem}
\vspace*{-0.1cm}
The graph-theoretic notion of structural observability index is demonstrated via a digraph \(G(A, C)\) shown in Fig.~\ref{f:figstructuralindex}. Given \(A \in \lbrace 0, \ast \rbrace^{d\times d}\) and \(C\in \lbrace 0, \ast\rbrace^{p\times d}\), there exists several possible disjoint unions of output cacti spanning the digraph \(G(A, C)\). 
 
\tikzset{middlearrow/.style={
        decoration={markings,
            mark= at position 0.5 with {\arrow{#1}} ,
        },
        postaction={decorate}
    }
}
\begin{figure}[th]
\begin{subfigure}{.3\textwidth}
  \includegraphics[width=\textwidth]{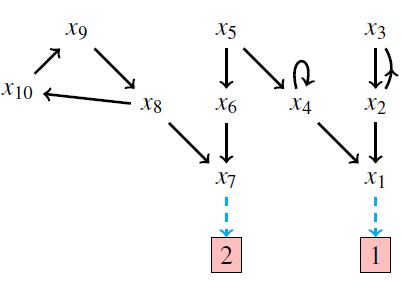}
 \caption*{(a)}
\end{subfigure}
\quad
\begin{subfigure}{.3\textwidth}
  \includegraphics[width=\textwidth]{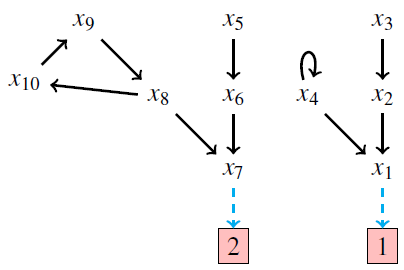}
\caption*{(b)}
\end{subfigure}
\quad
\begin{center}
\begin{subfigure}{.3\textwidth}
 \centering
 \includegraphics[width=\textwidth]{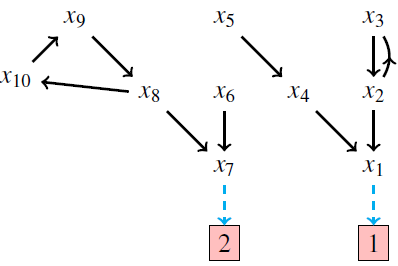}
\caption*{(c)}
\end{subfigure}
\end{center}
\caption{Illustration of a digraph \(G(A, C)\) in (a) and  two possible output cacti shown in (b) and (c). Observe that (b) has four state vertices in one output cactus and six in the other, whereas (c) has one output cactus with five state vertices and the other with five. Moreover, it is not hard to show that these  two are the only spanning output cacti of \(G(A, C)\); Therefore, the structural observability index is five.}
\label{f:figstructuralindex}
\end{figure}

\begin{lemma}\cite[Corollary 1]{ref:SerAlbGeo-17} 
\label{l:observabilityindex}
Let \(A\in \lbrace 0, \ast \rbrace^{d \times d}\) and \(C \in \lbrace 0, \ast \rbrace^{p \times d}\), with \(A_{ii}=\ast\) for all \(i=1,2, \ldots, d\). Let \(\lbrace \mathcal{H}_i\rbrace_{i \in \mathcal{K}}\) be the collection of all spanning forests of \(G(A, C)\) containing only directed trees towards the output vertices, where \(\mathcal{K}\) contains the indices of such spanning forests. Let  \(z_i \in \N\)  be the number of directed trees in \(\mathcal{H}_i\), i.e., \(\mathcal{H}_i= \lbrace \mathcal{T}_j^i\rbrace_{j=1}^{z_i}\). Then, the structural observability index of the pair \((A, C)\) is
\[
\mu(A, C)\Let\min_{i\in \mathcal{K}}\max_{\mathcal{T}\in \mathcal{H}_i}\abs{\mathcal{T}}_s,
\]
where \(\abs{\mathcal{T}}_s\) is the number of state vertices in a tree \(\mathcal{T}\) towards an output vertex.%\footnote{\(\mu(A, C)\) is \(+\infty\) if the pair \((A, C)\) is structurally unobservable since no spanning forest exist for a structurally unobservable pair satisfying the condition that each state vertex has a self loop.}
\end{lemma}
\subsection{Submodular function}
We need a short digression into the properties of submodular functions:
Let \(V\) be a non-empty finite set, and let \(2^V\) denote the collection of all subsets of \(V\).
\begin{definition}
\label{d:submodular}
 A function \(f:2^V\to \Real\) is \emph{submodular} if for all \(S\subset T\subset V\)  and \(v\in V\setminus T\) we have \(f(S\cup \lbrace v\rbrace)-f(S)\geq f(T\cup \lbrace v \rbrace)-f(T).\)
\end{definition} 
A function \(f:2^V\to \Real\) is \emph{supermodular} if \((-f)\) is submodular.
\begin{definition}
A function \(f:2^V\to \Real\) is \emph{monotone non-decreasing} if for every \(S\subset T\subset V\) we have \(f(S)\leq f(T)\).
\end{definition}

In words, the property of submodularity implies that adjoining an element \(v\) to the set \(S\) results in a larger increment in \(f\) than adding \(v\) to a superset \(T\) of \(S\)  \cite{ref:Lov-83}. When a function \(f\) is submodular, one can use greedy algorithms that yield, in reasonable time \cite{ref:NemWolFis-78}, approximate solutions that are often very close to an optimal solution.%,%The \emph{restriction} of a function \(f\) to \(S \subset V\) is denoted by \(f\circ S(\cdot)\) is defined by \(\big(f\circ S\big)(X) \Let f(X)\), for all \(X \subset S\subset V\). 
%\begin{thm}\cite[Theorem 9.3.4]{ref:Nar-97} ,\cite{ref:LinBil-10}
%\label{t:restriction}
%Let \(S \subset V\). If \(f: 2^V\to \Real\) is a submodular function, then \(\big(f \circ S\big)\) is also a submodular function. 
%\end{thm}
\section{Minimal sensor placement problem}
\label{s:Minimal sensor placement problem}
Before stating the precise problem statement, we define the identity structured output matrix \(C=I_{d}\in \lbrace 0, \ast \rbrace^{d\times d}\) as follows: \([I_d]_{ij}=\ast\) if \(i=j\) and \(0\) otherwise, where \(i,j\in [d]\). Recall \(\mathcal{A}=\lbrace \st_1, \st_2, \ldots, \st_d\rbrace\). An output matrix \(C=I_F \in \lbrace 0, \ast \rbrace^{\abs{F}\times d}\) is obtained from \(I_{d}\), as defined above, by retaining the rows corresponding to the state vertices in \(F\subset \mathcal{A}\).%%We assume that if the output matrix \(C\) is represented as \(I_\mathcal{A}\) then it means that it has an identity structure defined in \eqref{e:identitystructure}.

Given \(A\in \lbrace 0, \ast \rbrace^{d \times d}\), \(C=I_F\in \lbrace 0, \ast \rbrace^{\abs{F}\times d}\), and \(\ell \in [d]\) such that \(\mu(A,I_F)\leq \ell\),
\begin{equation}
\label{keyproblem1}
\left\{
\begin{aligned}
   &\minimize_{J\subset F} && \abs{J}\\
   &\sbjto && \mu(A, I_J)\leq \ell,
\end{aligned}
\right.
\tag{\(\mathcal P_1\)}
\end{equation}
where \(F \subset \mathcal{A}\), \(I_J\) is the submatrix of the output matrix \(I_F\) obtained by retaining the rows corresponding to state vertices in \(J\). \eqref{keyproblem1} determines the minimal subset of states required to admit outputs from the set of available states \(F\) to ensure a desired bound on structural observability index.

A special case of \eqref{keyproblem1} is when the output matrix \(C=I_{d}\in \lbrace 0, \ast \rbrace^{d \times d}\), i.e., \(F=\mathcal{A}\) and we refer to this problem as \((\mathcal P_1^\prime)\) in this sequel.
%The following is a special case of \eqref{keyproblem1}:  Given \(A\in \lbrace 0, \star \rbrace^{d \times d}\), \(C=I_{d}\in \lbrace 0, \star \rbrace^{d \times d}\), and \(\ell \in [d]\),
%\begin{equation}
%\label{keyproblem2}
%\left\{
%\begin{aligned}
%  &\minimize_{L\subset \mathcal{A}} && \abs{L}\\
%  &\sbjto && \mu(A, I_L)\leq \ell,
%\end{aligned}
%\right.
%\tag{\(\mathcal P_1^\prime\)}
%\end{equation}
%where \(I_L\) is the submatrix of the identity matrix \(I_{d}\), defined in \eqref{e:identitystructure}, obtained by retaining the rows corresponding to state vertices in \(L\). A solution of \eqref{keyproblem2}, therefore, determines the minimal number of state variables that must admit outputs to ensure that the structural observability index of the system \eqref{e:linsys} is at most \(\ell\). 
%%%%%%%%%%%%%%%%%%%%%%%%%%%%%%%%%%%%%%%%%%%%%%%%%%%%%%%%%%%%%%%%%%%%%%%%%%%%%%%%
%\vspace*{-0.1cm}
% \begin{rem}
% \label{r:outputandstate}
%In the context of \eqref{e:linsys}, let \(n_1\)  and \(n_2\) denote the minimum number of outputs and the minimum number of state variables to admit distinct outputs so that the resulting structural observability index of the system \eqref{e:linsys} is at most \(\ell\). Then \(n_1\leq n_2\). 
%\end{rem}

Next we show that \eqref{keyproblem1} (and hence \((\mathcal P_1^\prime)\)) can be solved optimally when the bound \(\ell\in \lbrace 1, 2\rbrace\). Clearly, solving \eqref{keyproblem1} for \(\ell=1\) is trivial since it requires each state vertex to be directly connected to a distinct output, i.e., the solution is the output matrix \(C\) of identity structure \(I_{d}\) (as defined above).%Thus, the NP-completeness  of  \eqref{keyproblem1} arises when the desired bound required on the structural observability index has value at least \(3\).

The concept of a matching is needed to solve \eqref{keyproblem1} for \(\ell=2\). Recall that for an undirected graph \(G=(V, E)\), a \emph{matching} is a set of edges with no shared endpoints. The vertices incident to the edges of a matching \(M\) are said to be \emph{saturated} by \(M\); the other are \emph{unsaturated}. A \emph{maximum matching} is  one that has the largest cardinality among all possible matchings in a graph. We define a weight function \(w: E \to \Real\) that assigns weights to the edges of the graph \(G\). Subsequently, we introduce the \emph{maximum weight matching} problem, concerned with finding a matching of \(G\) that has the maximum weight-sum of its edges; in other words, determining a matching \(M^{*}\) such that \(\sum_{e \in M^{*}} w(e) \geq \sum_{e \in \bar{M}} w(e)\) for any matching \(\bar{M}\) in \(G\). We solve \eqref{keyproblem1} for \(\ell=2\) by employing Algorithm \ref{AlgoProblem1}: 

\begin{algorithm}[h]
	\KwIn{\(G(A)=(\mathcal{A}, E_A)\), \(F \subset\mathcal{A}\) such that \(\mu(A, I_F)\leq 2\).}
\KwOut{A solution of \eqref{keyproblem1},  \(J^*\subset F\).}
\nl Construct an undirected graph \(G_u=(V, E_u)\) from \(G(A)\) with \(V=\mathcal{A}\) and \(E_u= E_F \sqcup E_{\mathcal{A}\setminus F}\). \((\st_i, \st_j) \in E_F\), if \(\st_i, \st_j \in F\) and either \((\st_i,\st_j)\in E_A\) or \((\st_j, \st_i)\in E_A\). \((\st_k,\st_m)\in E_{\mathcal{A}\setminus F}\), if \(\st_k \in \mathcal{A}\setminus F , \st_m\in F\) and \((\st_k,\st_m)\in E_A\). Neglect self-loops, if present.

\nl Define the weight \(w\) for the edges of \(G_u\):

   \begin{equation}
\label{e:weight}
w(e)\leftarrow\begin{cases} 1 &\mbox{for}\,\, e \in E_F,\\
\abs{E_u}+1 &\mbox{for}\,\, e\in E_{\mathcal{A}\setminus F}.
\end{cases}   
\end{equation}

\nl  Determine a maximum weight matching \(M\) in \(G_u\) under weight \(w\).

\nl For each edge \(e=(\st_i, \st_j)\in M\), the direction of the edge \(e\) is chosen in accordance to the direction of \(e\) in \(G(A)\).

\nl If \(e=(\st_i, \st_j)\in M \) with \((\st_i, \st_j)\in E_A\) then \(\st_j \in J^*\) and if \(\st_k\) is unsaturated by \(M\) then \(\st_k \in J^*\). 
 
\caption{Algorithm to solve \eqref{keyproblem1} for \(\ell = 2\).} 
\label{AlgoProblem1}
\end{algorithm}

In Step \(1\), we construct an undirected graph \(G_u\) from \(G(A)\) with vertex set \(\mathcal{A}\) and edge set \(E_u=E_F\sqcup E_{\mathcal{A}\setminus F}\). An edge exists between a pair of vertices \(\st_i\), \(\st_j\) in \(G_u\) if any one of the conditions is satisfied: (i) both the vertices lie in \(F\) and either \((\st_i, \st_j)\in E_A\) or \((\st_j, \st_i)\in E_A\); (ii) one vertex say \(\st_i\) lies in \(\mathcal{A}\setminus F\) and the other one \(\st_j\) is in \(F\) and a directed edge exists from \(\st_i\) to \(\st_j\) in \(G(A)\). The undirected edges satisfying condition (i) forms set \(E_F\) and the undirected edges satisfying condition (ii) forms set \(E_{\mathcal{A}\setminus F}\). We assign weights to the edges of \(E_u\) and compute a maximum weight matching \(M\) in \(G_u\) in Steps \(2\) and \(3\) respectively. In Step \(5\), we form a set \(J^{*}\) by collecting the vertices unsaturated by \(M\) and by selecting one vertex from each undirected edge in \(M\) in the following way: for \(e\)=\((\st_i, \st_j)\in M\), if \((\st_i, \st_j)\in E_A\) then \(\st_j\in J^{*}\); if \((\st_j, \st_i)\in E_A\) then \(\st_i\in J^{*}\); if both \((\st_i, \st_j)\) and \((\st_j, \st_i)\) are in \(E_A\) then any one among \(\st_i\) or \(\st_j\) is collected in \(J^{*}\). 

The construction of the undirected graph \(G_u\) from \(G(A)\) requires \(O(d^2)\) computations. A maximum weight matching is computed in \(O(d^3\log d) \) computations \cite[Chapter 11]{ref:BerVyg-06} and the rest of the steps have linear complexity. Therefore, the overall complexity of  Algorithm \ref{AlgoProblem1} is \(O(d^3\log d)\).

\begin{lemma}
\label{l:lemmasoi2}
Let \(A \in \lbrace 0, \ast\rbrace^{d\times d}\), \(C\)=\(I_F\in \lbrace 0, \ast \rbrace^{\abs{F}\times d}\), and \(G(A)=(\mathcal{A}, E_A)\) be the state matrix, the output matrix, and the digraph associated with \(A\) and \(d\geq 2\). Suppose \(\mathcal{A}\setminus F\neq \emptyset\) and \(G_u\) be the undirected graph associated with \(G(A)\) obtained via Algorithm \ref{AlgoProblem1}. Then \(J^{*}\) obtained from  Algorithm \ref{AlgoProblem1} solves \eqref{keyproblem1} with \(\ell=2\).
\end{lemma}

\begin{proof}
Assume, as hypothesized that \(\mu(A, I_F)\leq 2\). By Theorem \ref{t:observabilityindex}, the digraph \(G(A, I_F)\) is spanned by a disjoint union of output cacti such that each cactus has at most \(2\) state vertices. In fact, each cactus is an output stem with at most \(2\) state vertices. Thus, the condition  \(\mu(A, I_F)\leq 2\) and the construction of \(G_u\) in Algorithm \ref{AlgoProblem1} confirms that each \(\st_i\in \mathcal{A}\setminus F\) can be associated with a distinct \(\st_j \in F\) such that \((\st_i, \st_j)\in E_{\mathcal{A}\setminus F}\).  Let \(M\) be a maximum weight matching in \(G_u\) under the weight function \(w\) defined in \eqref{e:weight}. The weight structure (Step 2) of Algorithm \ref{AlgoProblem1}  guarantees that every vertex \(\st_i \in \mathcal{A}\setminus F\) is saturated by an edge \(e=(\st_i, \st_j) \in M\) for some \(\st_j \in J^*\). The set of vertices unsaturated by \(M\) lies in \(F\) and belongs to \(J^*\) (Step 5). The output \(J^{*}\) of Algorithm \ref{AlgoProblem1} ensures that we obtain a disjoint union of output stems covering all state vertices and each stem has at most \(2\) state vertices. Thus, \(\mu(A, I_{J^*})\leq 2\). %The weight structure also ensures that the edges in \(E_{\mathcal{A}\setminus F}\) are given more preference than the edges in \(E_F\). Consider the case where \(\mathcal{A}\setminus F\neq\emptyset\)

%The steps involved to show minimality is similar to steps given in Lemma \ref{l:P2forl2}. 
Let \(\abs{\mathcal{A}\setminus F}=r\).  Suppose that there exists  \(J^{\prime} \subset F\) such that \(\abs{J^{\prime}}< \abs{J^*}\) and \(\mu(A, I_{J^{\prime}})\leq 2\). Since each output is directly connected to a distinct state in \(J^{\prime}\), we can obtain a disjoint union of cacti or output stems \(P\) spanning \(G(A, I_{J^{\prime}})\) such that each output cactus has at most \(2\) state vertices with each output being directly connected to the tip of a state stem in \(P\). Note that each output cactus in \(P\) can be one of the following three types:
\begin{itemize}[label=\(\circ\), leftmargin=*]
\item An output stem \(P_i\) consisting of two state vertices connected by an directed edge of the form \((\st_a,\st_b)\), where \(\st_a \in \mathcal{A}\setminus F \) and \(\st_b\in J^{\prime}\).
\item An output stem \(P_k\) comprising of an edge of the form \((\st_c,\st_d)\), where \(\st_c\in F\) and \(\st_d \in J^{\prime}\).
\item An output stem \(P_j\) consisting of only one state vertex \(\st_e \in J^{\prime}\).
\end{itemize}

Accordingly, \(P\) is the union of \(\lbrace P_i\rbrace_{i=1}^{r}\cup \lbrace P_k\rbrace_{k=r+1}^{n}\cup \lbrace P_j \rbrace_{j=n+1}^{\abs{J^{\prime}}}\) and \(n \leq \abs{J^{\prime}}\). Here each \(P_i\) is associated with an edge \(e_i \in E_{\mathcal{A}\setminus F}\) of \(G_u\), where \(1\leq i\leq r\). Similarly, \(P_k\) is associated with an edge \(e_k\in E_F\) of \(G_u\). Observe that the collection of edges  \(\lbrace e_i\rbrace_{i=1}^{r} \cup \lbrace e_k\rbrace_{k=r+1}^{n} \) refers to a  matching \(M^{\prime}\) in the undirected graph \(G_u\). Clearly, both \(M\) and \(M^{\prime}\) saturates all the vertices in \(\mathcal{A}\setminus F\). The condition  \(\abs{J^{\prime}}< \abs{J^*}\) implies that the edges \(e_k\) of the form \((\st_c,\st_d)\), where \(\st_c, \st_d\in F\), must be greater in number in \(M^{\prime}\) than in \(M\). Since the edges in \(\lbrace e_k\rbrace_{k=r+1}^{n}\) have unit weights each \(w(M^{\prime})> w(M)\). This gives a contradiction and completes the proof.% for this case.
\end{proof}
\begin{remark}
\label{r:soi2P2}
Since \((\mathcal P_1^\prime)\) is a special case of \eqref{keyproblem1}, a solution of \((\mathcal P_1^\prime)\) can be obtained by using Algorithm \ref{AlgoProblem1} by setting \(F=\mathcal{A}\). Indeed, it follows that the edge set \(E_u = E_F\) since \(E_{\mathcal{A}\setminus F}\) is empty, and since unit weight is assigned to each edge in \(E_F\), determining a maximum weight matching in Step \(3\) reduces to finding a maximum matching \(M\) in \(G_u\). Therefore, the set of state vertices \(J^*\) (obtained in Step \(5\) of Algorithm \ref{AlgoProblem1}) such that \(\mu(A, I_{J^{*}})\leq 2\), satisfies \(\abs{J^*}=\abs{M}+ (d-2\abs{M})=d-\abs{M}\), where \(M\) is a maximum matching in \(G_u\) and \((d-2\abs{M})\) is the number of state vertices unsaturated by \(M\) in \(G_u\). The proof of optimality of the obtained solution \(J^{*}\) is omitted due to similarity with the proof of Lemma \ref{l:lemmasoi2}.
\end{remark}

We shall now discuss the complexity of \((\mathcal P_1^\prime)\) and show that it is hard whenever the bound on the structural observability index \(\ell\geq 3\). The decision version of \((\mathcal P_1^\prime)\) is given by:% and show that it is hard when the bound on the structural observability index is at least 3.

\noindent \textit{Instance: \(A \in \lbrace 0, \ast \rbrace^{d \times d}, C=I_{d}\in \lbrace 0, \ast\rbrace^{d \times d},\) and two positive integers \(\ell \in [d]\) and \(K\leq d\).}\\
\textit{Question: Does there exists a set \(J \subset \mathcal{A}\) with \(\abs{J}\leq K\) such that \(\mu(A, I_J)\leq \ell\).}
%We prove that \eqref{keyproblem1} is NP-complete by showing that every instance of an NP-complete problem can be polynomially transformed to an instance of \eqref{keyproblem1}. 

We use the following NP-complete problem to show the hardness of \((\mathcal P_1^\prime)\) whose decision version is given by:

\noindent\textit{Instance: An undirected graph \(G=(V, E)\), weight \(w(v)\in \mathbb{N}\) for each \(v \in V\), positive integers \(\ell\) and \(K\leq \abs{V}\). }\\
\textit{Question: Can the vertices in \(V\) be partitioned into \(k\leq K\) disjoint sets \(V_1, V_2, \ldots, V_k\) such that, for \(1 \leq i \leq k\), the induced subgraph of \(G\) by \(V_i\) is connected and the sum of the weights of the vertices in \(V_i\) does not exceed \(\ell\)?} 

The preceding problem is the \emph{Bounded Component Spanning Forest (BCSF)} problem, and it remains NP-complete even if the weights of all the vertices in \(G\) is equal to \(1\) and \(\ell\) is any fixed integer larger than \(2\) \cite{ref:GarJoh-79}. %\footnote{The hardness of \eqref{keyproblem2} is already proved in \cite{ref:Roc-14} by using a well-known NP-hard SET COVER problem. However, we employ a different way to show the NP-hardness of \eqref{keyproblem2}.}
\begin{theorem}
\label{t:NPhardlge3}
\((\mathcal P_1^\prime)\) is NP-complete if the desired bound on the structural observability index is any fixed integer greater than or equal to \(3\).
\end{theorem}
\begin{proof}
Given \(A\) and a positive number \(K\), for any \(J\subset \mathcal{A}\), it can be verified whether \(\abs{J}\leq K\) and \(\mu(A, I_J)\leq \ell\) in polynomial time \cite{ref:SueDau-97}. Therefore, the decision version of \((\mathcal P_1^\prime)\) is in NP. 

Let \(G=(V, E)\) be an undirected graph with \(\abs{V}=r\).  We define a weight function \(w:V \to \mathbb{N}\) such that \(w(v)=1\) for all \(v \in V\). For each connected component induced by \(V_i\) in \(G\), \(\sum_{v_j \in V_i} w(v_j)=\abs{V_i}\) for \(1\leq i\leq k\), i.e., the sum of the weights of the vertices in \(V_i\) is equal to the number of vertices in the component induced by \(V_i\).  We construct a state matrix \(A \in \lbrace0, \ast\rbrace^{r\times r}\) as follows: for every vertex \(v_i \in V\), we associate a state \(x_i\) and with each undirected edge \(e_{ij}=(v_j, v_i)\) we associate two non-zero entries \(A_{ij}=\ast\) and \(A_{ji}=\ast\). We assume that \(A_{ii}=\ast\) for all \(i\in [r]\). Therefore, \(G(A)\) has only bidirectional edges, and a self-loop at every vertex. Let \(C\) be a matrix with a diagonal structure and dimension \(r\), i.e., \(C=I_r\in \lbrace 0, \ast\rbrace^{r \times r}\). Let \(\ell\) be a fixed integer larger than \(2\).%This transformation can be performed in polynomial time. %as defined earlier in this section. %Note that  \(C=I_r\) corresponds to the case where \(F=\mathcal{A}\) and the above reduction can be performed in polynomial time. %Let \(\ell\) be the bound needed on the structural observability index. %\footnote{Note that the bounded component spanning forest remains NP-complete when \(w(v)=1\) for all \(v \in V\) and \(\ell\geq 3\).}Let \(\ell^{\prime}=\ell\) be the desired bound on the number of vertices in each connected component.In our analysis, we consider the instance of bounded component spanning forest where unit weight is assigned to every vertex of the graph and the bound is any fixed integer larger than \(2\), i.e., \(\ell\) be fixed in \(\lbrace 3,4, \ldots\rbrace\).To show that \((\mathcal P_1^\prime)\) is NP-complete for every fixed \(\ell\) in \(\lbrace 3,4, \ldots\rbrace\), we provide a reduction from an instance of bounded component spanning forest problem where \(w(v)=1\) for all \(v\in V\) and \(\ell^{\prime}\) is a fixed integer considered to have value same as \(\ell\) to it. Note that the bounded component spanning forest problem is NP-complete for this class.

We will prove the following statement: \(G\) has a partition of at most \(K\) connected components such that the sum of the weights of the vertices in each component is at most \(\ell\) if and only if there exists a \(J \subset\mathcal{A}\) such that \(\abs{J}\leq K\) and \(\mu(A, I_J)\leq \ell\). Let us suppose that \(G\) has a partition of  at most \(K\) connected components. Let \(V_1, V_2, \ldots, V_k\) be the vertex sets of the components of that partition, where \(k\leq K\). Since \(w(v)=1\) for all \(v \in V\), \(\abs{V_i}\leq \ell\) for all \(i \in [k]\). Since each component is connected, each has an undirected spanning tree. Select one vertex \(v_i\) from each component \(V_i\) of \(G\). Let \(J= \bigcup_{i=1}^{k}\lbrace \st_i \rbrace\), where \(x_i\) is the state vertex associated with \(v_i\) selected from the component induced by \(V_i\). Observe that \(\abs{J}=k\leq K\). These vertices in \(J\) are directly connected to distinct output vertices. Thus, \(G(A, I_J)\) has a spanning forest topped at output vertices such that each directed tree has at most \(\ell\) state vertices. Hence, \(\mu(A, I_J)\leq \ell\). Conversely, suppose that there exists a \(J\subset \mathcal{A}\) such that \(\abs{J}\leq K\) and \(\mu(A, I_J)\leq \ell\). Then by Lemma \ref{l:observabilityindex}, \(G(A, I_J)\) has a spanning forest topped at output vertices such that the number of state vertices in each directed tree being at most \(\ell\) and each state vertex in \(J\) is directly connected to a distinct output. Since edges between distinct vertices in \(G(A)\) are bidirectional, each directed tree without the output vertex corresponds to a connected component of \(G\) with at most \(\ell\) state vertices. Therefore, we obtain a partition, of cardinality at most \(K\) and weight of each component at most \(\ell\), of the graph \(G\). %\((\mathcal P_1^\prime)\) is NP-complete when the desired bound on the structural observability index is any fixed integer \(\ell \geq 3\).\qed% when the bound on the structural observability index is any fixed integer \(\ell\) greater than equal to \(3\), thereby completing the proof. \qed Therefore, for every  fixed \(\ell\geq 3\), , and the desired assertion follows.% Observe that establishing the preceding statement will complete the proof.

The bounded component spanning forest problem remains NP-complete when the weights \(w(v)=1\) for all \(v\in V\) and the bound \(\ell\) is fixed in  \(\lbrace 3,4, \ldots\rbrace\). Under these conditions on the weights and the bound, in the above analysis, we provided a polynomial time reduction showing that the bounded component spanning forest problem has a solution of size at most \(K\) with the sum of the weights of the vertices in each component at most \(\ell\) if and only if \((\mathcal{P}_1^{\prime})\) has a solution for the constructed instance of cardinality at most \(K\) with the bound on the structural observability index being \(\ell\). Thus, it follows from the above two points that \((\mathcal{P}_1^{\prime})\) is NP-complete whenever the bound \(\ell\) on the structural observability index is fixed to a value \(\geq 3\), thereby completing the proof.
\end{proof}  
\vspace*{-0.4 cm}
Specifically, for parameters \(A\) and \(C=I_{d}\), a solution to \eqref{keyproblem1} provides, in particular, a solution to \((\mathcal P_1^\prime)\). Therefore, \eqref{keyproblem1} is at least as difficult as \((\mathcal P_1^\prime)\). Consequently, \eqref{keyproblem1} is NP-complete even if the desired bound on the structural observability index is any fixed integer larger than two.
\begin{remark}
\label{r:NP-hardp1}
	\mbox{}
	\begin{itemize}[label=\(\circ\), leftmargin=*]
		\item As discussed in \S\ref{s:related work}, NP-hardness of \((\mathcal P_1^\prime)\) is proved in \cite{ref:SerAlbGeo-17} by reducing from the Graph-Partitioning problem \cite{ref:GarJoh-79}. The proof given in \cite{ref:SerAlbGeo-17} demonstrates that an optimal solution of \((\mathcal P_1^\prime)\) leads to a feasible solution to the Graph-Partitioning problem, but optimality of the obtained solution was not addressed there. %Thus, by slight addition to the proof of \cite{ref:SerAlbGeo-17}, it does show that \((\mathcal P_1^\prime)\) is NP-hard if the required bound on the structural observability index is any fixed integer larger than \(2\). 
			In contrast, we use the decision versions of \((\mathcal{P}_1^{\prime})\) and the BCSF problem to demonstrate the equivalence between the solution of the given instance of the BCSF problem and the solution of \((\mathcal{P}_1^{\prime})\) for the constructed instance.
		\item While subclasses for which one can obtain an optimal solution of \((\mathcal{P}_1^{\prime})\) are not considered in \cite{ref:SerAlbGeo-17}, it follows from our proof of Theorem \ref{t:NPhardlge3} that \((\mathcal{P}_1^{\prime})\) is NP-hard even when \(\ell \ge 3\), the digraph \(G(A)\) has only bidirectional edges, and every state vertex has a self-loop. This is because the constructed instance of \((\mathcal{P}_1^{\prime})\) lies in this subclass. In fact, we can address this subclass via the BCSF problem because an instance of \((\mathcal{P}_1^{\prime})\) under this assumption can be reduced to an instance of the BCSF problem in polynomial time. By neglecting the self-loops, we view the digraph \(G(A)\) as a weighted undirected graph \(G_u\) by setting \(w(x_i)=1\) for all \(i\in [d]\). Given an integer \(\ell\), the output of the BCSF problem on the instance \((G_u, \ell)\) finds a decomposition of \(G_u\) with the minimum number of connected partitions such that each partition has at most \(\ell\) vertices. Then it is easy to see from our proof of Theorem \ref{t:NPhardlge3} that:
			\begin{enumerate}[leftmargin=*, label=\textup{(\roman*)}, align=left, widest=ii]
				\item a minimal partition given by the BCSF problem provides an optimal solution of \((\mathcal{P}_1^{\prime})\), and
				\item for any \(\alpha\geq 1\), an \(\alpha\)-optimal solution of the BCSF problem on the instance \((G_u, \ell)\) gives an \(\alpha\)-optimal solution of \((\mathcal{P}_1^{\prime})\) for the given instance.\footnote{Recall that an \(\alpha\)-optimal solution is a feasible solution whose value is at most \(\alpha\) times the optimal value.}
			\end{enumerate}
			Thus, under the above assumption on the digraph \(G(A)\), any approximation for the BCSF problem gives an approximation for \((\mathcal{P}_1^{\prime})\)  with the same approximation ratio. The proof technique employed here, therefore, sheds far more light into the problem than the one in \cite{ref:SerAlbGeo-17}.
	\end{itemize}
\end{remark}
Next we impose the following assumption on the digraph \(G(A)\) associated with the state matrix \(A\).

\begin{assumption}
\label{a:directedrootedtree}
We stipulate that the digraph \(G(A)=(\mathcal{A}, E_A)\) is a directed tree towards an \(x\in \mathcal{A}\) with a self-loop at every state vertex.
\end{assumption}
We have the following Lemma that plays a central role to solve \eqref{keyproblem1} when Assumption \ref{a:directedrootedtree} holds.% directly connected to a state vertex in \(L \subset \mathcal{A}\)
\begin{lemma}
\label{l:subtree}
Let \(A\in \lbrace 0, \ast \rbrace^{d\times d}\) and \(C=I_F\in \lbrace 0, \ast \rbrace^{\abs{F}\times d}\). Suppose Assumption \ref{a:directedrootedtree} holds. Let \(\ell\in [d]\) be the desired bound on the structural observability index with \(\mu(A, I_F)\leq \ell\). Let \(P=\lbrace \mathcal{T}_1, \mathcal{T}_2,\ldots, \mathcal{T}_{\abs{J^*}}\rbrace\) be a partition of \(G(A)=(\mathcal{A}, E_A)\) into the minimum number of subtrees, where each \(\mathcal{T}_i\) represents a subtree such that its tip is a vertex present in \(F\subset \mathcal{A}\) and the number of state vertices in no subtree exceeds \(\ell\). The collection \(J^{*}\subset F\) of tips of each subtree solves \eqref{keyproblem1}. \footnote{A \emph{tip} of a tree \(\mathcal{T}\) is the vertex that does not have any outgoing edges to any other vertex in \(\mathcal{T}\) and has a directed path from every vertex to it in \(\mathcal{T}\). A \emph{subtree} is a subgraph of the directed tree which satisfies all the properties of tree.} %\(L=\lbrace \st_1, \st_2, \ldots, \st_{\abs{L}}\rbrace\) solves \eqref{keyproblem2}.
\end{lemma}  
\begin{proof}
Since \(\mu(A, I_F)\leq \ell\), by Lemma \ref{l:observabilityindex}, there exists a partition or a spanning forest topped at output vertices for \(G(A, I_F)\) such that each directed tree is towards an output vertex and has at most \(\ell\) state vertices. Consider the partition \(P\) of \(G(A)\) such that each subtree \(\mathcal{T}_i=(X_i, E_{X_i})\) has at most \(\ell\) state vertices, where \(X_i \subset \mathcal{A}\), \(E_{X_i}\) is the edges in the subtree induced by \(X_i\) for \(1\leq i\leq \abs{J^*}\) (excluding self-loops). Let \(\st_i \in F\) be the tip of the subtree \(\mathcal{T}_i\). It follows that \(G_i=(X_i \sqcup \lbrace i\rbrace, E_{X_i} \sqcup \lbrace \st_i, i\rbrace)\), where \(\st_i \in J^{*}\) and \(i \in \mathcal{C}\) (output set), is a directed tree towards \(i\in \mathcal{C}\). \(J^{*}\) contains the tip of each subtree. Therefore, the digraph \(G(A, I_{J^*})\) has a spanning forest with a collection of directed trees \(\lbrace G_i\rbrace_{i=1}^{\abs{J^{*}}}\) with at most \(\ell\) state vertices in each \(G_i\). Since every state vertex has a self-loop, by Lemma \ref{l:observabilityindex} \(\mu(A, I_{J^*})\leq \ell\). The minimality assertion follows by the fact that if \(J^{\prime}\subset F\) has size less than \(J^*\), then there exists another partition of the tree into subtrees of smaller cardinality than \(P\) where the tip of each subtree is a vertex lying in \(F\). This leads to a contradiction and confirms that \(J^{*}\) solves \eqref{keyproblem1}. 
\end{proof}
In the further analysis to solve \eqref{keyproblem1}, when Assumption \ref{a:directedrootedtree} holds, we neglect the self-loops present at every state vertex of \(G(A)\). We provide the merging procedure that will be used later in  Algorithm \ref{algorithmdirectedtree}. Given \(F \subset \mathcal{A}\), we use this procedure in Algorithm \ref{merging} to transform the given directed tree \(G(A)\) into another directed tree \(G^{\prime}(A)\) whose vertex set is \(F\). Given the directed tree \(G(A)=(\mathcal{A}, E_A)\) and \(F \subset \mathcal{A}\), define a variable \(h\) associated with every \(\st_i \in \mathcal{A}\setminus F\) such that \(h(\st_i)=\st_j\) if \((x_i, x_j)\in E_A\) and \(\st_i \neq \st_j\) (i.e., excluding self-loops).
\begin{algorithm}
\KwIn{directed tree \(G(A)=(\mathcal{A}, E_A)\), \(F \subset\mathcal{A}\), variable \(h\), and weight \(w(\st_i)=1\) for all \(\st_i\in \mathcal{A}\)}
	\KwOut{directed tree \(G^{\prime}(A)\) obtained from \(G(A)\) with vertex set as \(F\) and weight \(w: F \to \N \)}
	
	\nl If \(F=\mathcal{A}\)
	
	\quad return \(G(A)\) and \(w(\st_i)=1\) for all \(\st_i\in F\)
	
	\nl else
	
	\nl for \(k=\) maximum level in \(G(A)\) down to 1 \textbf{do}
	
	\nl begin process \(k{th}\) level
	
	\nl \textbf{while} there exists \(\st_i\in \mathcal{A}\setminus F\) in level \(k\) \textbf{do}
	
	\nl if \(\st_j=h(x_i)\) then
	
	\quad set \(w(\st_j)=w(\st_j)+w(\st_i)\) and merge \(\st_i\) to \(\st_j\)
	
	\nl end
	
	\nl end \textbf{while}
	
	\nl end process level
	
	\nl end for
	
	\nl end 
	
	\caption{Merging procedure}
\label{merging}
\end{algorithm}

In  Algorithm \ref{merging}, we begin with the maximum level (i.e., from the leaves) and go down to the tip of the tree (level 1). Note that when a state vertex \(\st_i\in \mathcal{A}\setminus F\) is merged with \(\st_j\) such that \(h(\st_i)=\st_j\) in Step 6, then the edges of the given tree incident to \(\st_i\) are now incident to \(\st_j\), and the digraph gets modified. Since the maximum number of levels is bounded above by \(d\) and the procedure comprises of only `for' and `while' loops, the overall complexity of Algorithm \ref{merging} is \(O(d)\).%, where \(d\) is the number of state vertices in \(\mathcal{A}\).For \(F=\mathcal{A}\), we obtain the original digraph as \(G^{\prime}(A)\). In the other case,

By Lemma \ref{l:subtree} it is clear that to solve \eqref{keyproblem1} we need to find a minimal partition of \(G(A)\) into subtrees such that the tip of each subtree lies in \(F\) and the number of state vertices in each subtree does not exceed the bound \(\ell\) imposed on the structural observability index. For this purpose, we use the problem of finding a minimal partition \(P\) of a tree with positive weight assigned to every vertex in the directed tree such that the sum of the weights of the vertices of  no subtree exceed  a prespecified value \(\ell\); This problem is solved in \cite{ref:KunMis-77}. The key idea of the algorithm in \cite{ref:KunMis-77} is discussed briefly next. Let \(\mathcal{T}\) be the given tree and \(\mathcal{T}_{m}\) be a subtree with tip at \(\st_m\), \(S(m)\) be the set of children of \(\st_m\), i.e., having a directed edge to \(\st_m\), \(w(m)\) be the weight of the vertex \(\st_m\), and \(W(m)\) be the sum of the weights of the vertices in the subtree \(\mathcal{T}_{m}\). Each vertex has weight at most \(\ell\). Given the pre-specified bound \(\ell \in \N\), if \(\st_m\) is a vertex such that \(W(m)> \ell\) and \(W(k)\leq \ell\) for all \(\st_k\in S(m)\), then the edge \((\st_{k_0}, \st_m)\) is removed where \(W(k_0)=\max_{\st_k\in S(m)} W(k)\). This results in the removal of the subtree whose tip is \(x_{k_0}\), \(\mathcal{T}_{k_0}\), from the tree \(\mathcal{T}\) and \(\mathcal{T}_{k_0}\) becomes a component of the partition \(P\). By proceeding along the tree level by level, starting from the leaves and ending at the tip, we locate the vertex \(\st_m\) in  this procedure. The resultant tree, obtained after deletion of the subtree whose tip is \(\st_{k_0}\), is analysed further by employing the same procedure as given above. At any stage of the algorithm, a single tree is modified by the deletion of a subtree. We employ this linear time algorithm provided in \cite{ref:KunMis-77} to find a solution of \eqref{keyproblem1} via Algorithm \ref{algorithmdirectedtree}.

\begin{algorithm}
\KwIn{\(A \in \lbrace 0, \ast \rbrace^{d \times d}\) and \(F \subset\mathcal{A}\) such that \(\mu(A, I_F)\leq \ell\)}
	\KwOut{A solution of \eqref{keyproblem1},  \(J^{*}\subset F\).}
	
	\nl Assign weight \(w(\st_i)=1\) for all \(\st_i \in \mathcal{A}\) 
	
	\nl Use merging procedure given in Algorithm \ref{merging} to obtain \(G^{\prime}(A)\) and weight function \(w:F \to \N \).
	
	\nl Use Algorithm of \cite{ref:KunMis-77} on \(G^{\prime}(A)\) with weight function \(w\) and bound \(\ell\) to find an optimal partition \(P\)
	
	\nl \(J^{*}\) is the collection of tip of each subtree of \(P\)
	
\caption{Solve Problem \eqref{keyproblem1} under Assumption \ref{a:directedrootedtree}}
\label{algorithmdirectedtree}
\end{algorithm}

Since each step in Algorithm \ref{algorithmdirectedtree} has linear complexity, we obtain a solution to \eqref{keyproblem1} in linear time. It is easy to see that we can obtain a minimal partition of \(G(A)\) which satisfies the condition that each subtree has at most \(\ell\) number of state vertices and its tip lies in \(F\) from the minimal partition of \(G^{\prime}(A)\) obtained by using \cite{ref:KunMis-77} in Step 3 of Algorithm \ref{algorithmdirectedtree}.

Next, we demonstrate the steps in Algorithm \ref{algorithmdirectedtree} to identify a solution of \eqref{keyproblem1}  via an example given below from Fig.~ \ref{f:digraphsol} to Fig.~ \ref{f:figsolution}. %By using this procedure we transform the given directed tree into another directed tree where all the vertices belonging to the tree is in \(F\subset \mathcal{A}\). It allows us to use Algorithm of  \cite{ref:KunMis-77} to find an optimal partition \(P\) of subtrees. We illustrate the procedure

\begin{figure}[h]
\centering
   \begin{tikzpicture}
\node[draw=black,fill=orange!30] (1) at (0,0) [] {$\st_1$};
\node[draw=black,fill=orange!30] (2) at (2,-1.5) [] {$\st_2$};
\node[draw=black,fill=orange!30] (3) at (-2,-1.5) [] {$ \st_3 $};
\node  (4) at (0,-1.5) [] {$ \st_4 $};
\node (5) at (-3.5,-3) [] {$\st_5$};
\node (6) at (-1.5,-3) [] {$\st_6$};
\node[draw=black,fill=orange!30] (7) at (0,-3) [] {$ \st_7 $};
\node (8) at (3,-3) [] {$ \st_8 $};
\node[draw=black,fill=orange!30] (9) at (-4.5,-4.5) [] {$\st_9$};
\node (10) at (-2.8,-4.5) [] {$\st_{10}$};
\node (11) at (-2.2,-4.5) [] {$ \st_{11} $};
\node[draw=black,fill=orange!30] (12) at (-1.5,-4.5) [] {$ \st_{12} $};
\node (13) at (-0.8,-4.5) [] {$\st_{13}$};
\node (14) at (0,-4.5) [] {$\st_{14}$};
\node (15) at (1.0,-4.5) [] {$ \st_{15} $};
\node (16) at (4,-4.5) [] {$ \st_{16} $};
\node[draw=black,fill=orange!30] (17) at (3,-4.5) [] {$\st_{17}$};
\node (18) at (2,-4.5) [] {$\st_{18}$};
    \path [->,solid, line width=1.0pt](2) edge node[right] {} (1);
    \path [->,solid, line width=1.0pt](3) edge node[right] {} (1);
    \path [->,solid, line width=1.0pt](4) edge node[right] {} (1);
    \path [->,solid, line width=1.0pt](5) edge node[right] {} (3);
    \path [->,solid, line width=1.0pt](8) edge node[right] {} (2);
    \path [->,solid, line width=1.0pt](18) edge node[right] {} (8);
    \path [->,solid, line width=1.0pt](17) edge node[right] {} (8);
    \path [->,solid, line width=1.0pt](16) edge node[right] {} (8);
    \path [->,solid, line width=1.0pt](15) edge node[right] {} (7);
    \path [->,solid, line width=1.0pt](14) edge node[right] {} (7);
    \path [->,solid, line width=1.0pt](6) edge node[right] {} (3);
    \path [->,solid, line width=1.0pt](9) edge node[right] {} (5);
    \path [->,solid, line width=1.0pt](12) edge node[right] {} (6);
    \path [->,solid, line width=1.0pt](10) edge node[right] {} (5);
    \path [->,solid, line width=1.0pt](11) edge node[right] {} (6);
    
    \path [->,solid, line width=1.0pt](13) edge node[right] {} (6);
    \path [->,solid, line width=1.0pt](7) edge node[right] {} (4);
\end{tikzpicture}
 \caption{Illustration of a directed tree \(G(A)=(\mathcal{A}, E_A)\) towards \(\st_1\) associated with \(A\). Each state vertex has a self-loop, but we do not depict it to avoid clutter. The orange coloured vertices represent the state vertices present in \(F\). It is assumed that \(w(\st_i)=1\) for all \(\st_i \in \mathcal{A}\). Clearly, \(\st_1\in F\) to ensure structural observability. Notice that \(F\neq \mathcal{A}\). Let the desired bound on the structural observability index is \(\ell=7\).}
\label{f:digraphsol}

\end{figure}
\begin{figure}[h]
\centering
   \begin{tikzpicture}
\node[draw=black,fill=orange!30] (1) at (0,0) [] {$\st_1$};
\node[draw=black,fill=orange!30] (2) at (2,-1.5) [] {$\st_2$};
\node[draw=black,fill=orange!30] (3) at (-2,-1.5) [] {$ \st_3 $};
\node  (4) at (0,-1.5) [] {$ \st_4 $};
\node[draw=black,fill=green!10]  (5) at (-3.5,-3) [] {$\st_5$};
\node[draw=black,fill=green!10]  (6) at (-1.5,-3) [] {$\st_6$};
\node[draw=black,fill=orange!30] (7) at (0,-3) [] {$ \st_7 $};
\node[draw=black,fill=green!10]  (8) at (3,-3) [] {$ \st_8 $};
\node[draw=black,fill=orange!30] (9) at (-3.5,-4.5) [] {$\st_9$};
%\node (10) at (-3.7,-6) [] {$v_{10}$};
%\node (11) at (-3,-6) [] {$ v_{11} $};
\node[draw=black,fill=orange!30] (12) at (-1.5,-4.5) [] {$ \st_{12} $};
%\node (13) at (-1.5,-6) [] {$v_{13}$};
%\node (14) at (-0.7,-6) [] {$v_{14}$};
%\node (15) at (0.7,-6) [] {$ v_{15} $};
%\node (16) at (5.5,-6) [] {$ \st_{16} $};
\node[draw=black,fill=orange!30] (17) at (3,-4.5) [] {$\st_{17}$};
%\node (18) at (3.5,-6) [] {$v_{18}$};
    \path [->,solid, line width=1.0pt](2) edge node[right] {} (1);
    \path [->,solid, line width=1.0pt](3) edge node[right] {} (1);
    \path [->,solid, line width=1.0pt](4) edge node[right] {} (1);
    \path [->,solid, line width=1.0pt](5) edge node[right] {} (3);
    \path [->,solid, line width=1.0pt](8) edge node[right] {} (2);
  %  \path [->,solid](18) edge node[right] {} (8);
    \path [->,solid, line width=1.0pt](17) edge node[right] {} (8);
 %   \path [->,solid](16) edge node[right] {} (8);
   % \path [->,solid](15) edge node[right] {} (7);
  %  \path [->,solid](14) edge node[right] {} (7);
    \path [->,solid, line width=1.0pt](6) edge node[right] {} (3);
    \path [->,solid, line width=1.0pt](9) edge node[right] {} (5);
    \path [->,solid, line width=1.0pt](12) edge node[right] {} (6);
  %  \path [->,solid](10) edge node[right] {} (5);
 %   \path [->,solid](11) edge node[right] {} (6);
    
  %  \path [->,solid](13) edge node[right] {} (6);
    \path [->,solid, line width=1.0pt](7) edge node[right] {} (4);
\end{tikzpicture}
\caption{Illustration of the merging procedure: Note that \(h(\st_{10})=\st_5\), \(h(\st_{11})=\st_6\), \(h(\st_{13})=\st_6\), \(h(\st_{14})=\st_7\), \(h(\st_{15})=\st_7\), \(h(\st_{16})=\st_8\), and \(h(\st_{18})=\st_8\) and number of levels is four. \(\st_{10}\) is merged with \(\st_5\), \(\st_{11}\) and \(\st_{13}\) are merged with \(\st_6\), \(\st_{14}\) and \(\st_{15}\) are merged to \(\st_7\), and \(\st_{16}\) and \(\st_{18}\) are merged with \(\st_8\). Thus, \(w(\st_5)=2, w(\st_6)=3, w(\st_7)=3\), and \(w(\st_8)=3\). The rest of the vertices have unit weights.}
\end{figure}

\begin{figure}[h]
\begin{center}
   \begin{tikzpicture}
\node[draw=black,fill=orange!30] (1) at (0,0) [] {$\st_1$};
\node[draw=black,fill=orange!30] (2) at (2,-1.5) [] {$\st_2$};
\node[draw=black,fill=orange!30] (3) at (-2,-1.5) [] {$ \st_3 $};
%\node  (4) at (0,-1.5) [] {$ \st_4 $};
%\node[draw=black,fill=green!10]  (5) at (-3.5,-3) [] {$\st_5$};
%\node[draw=black,fill=green!10]  (6) at (-1.5,-3) [] {$\st_6$};
\node[draw=black,fill=orange!30] (7) at (0,-1.5) [] {$ \st_7 $};
%\node[draw=black,fill=green!10]  (8) at (3,-3) [] {$ \st_8 $};
\node[draw=black,fill=orange!30] (9) at (-3,-3) [] {$\st_9$};
%\node (10) at (-3.7,-6) [] {$v_{10}$};
%\node (11) at (-3,-6) [] {$ v_{11} $};
\node[draw=black,fill=orange!30] (12) at (-1.5,-3) [] {$ \st_{12} $};
%\node (13) at (-1.5,-6) [] {$v_{13}$};
%\node (14) at (-0.7,-6) [] {$v_{14}$};
%\node (15) at (0.7,-6) [] {$ v_{15} $};
%\node (16) at (5.5,-6) [] {$ \st_{16} $};
\node[draw=black,fill=orange!30] (17) at (2,-3) [] {$\st_{17}$};
%\node (18) at (3.5,-6) [] {$v_{18}$};
    \path [->,solid, line width=1.0pt](2) edge node[right] {} (1);
    \path [->,solid, line width=1.0pt](3) edge node[right] {} (1);
    \path [->,solid, line width=1.0pt](4) edge node[right] {} (1);
    \path [->,solid, line width=1.0pt](9) edge node[right] {} (3);
 %   \path [->,solid](8) edge node[right] {} (2);
  %  \path [->,solid](18) edge node[right] {} (8);
    \path [->,solid, line width=1.0pt](17) edge node[right] {} (2);
 %   \path [->,solid](16) edge node[right] {} (8);
   % \path [->,solid](15) edge node[right] {} (7);
  %  \path [->,solid](14) edge node[right] {} (7);
 %   \path [->,solid](6) edge node[right] {} (3);
  %  \path [->,solid](9) edge node[right] {} (5);
    \path [->,solid, line width=1.0pt](12) edge node[right] {} (3);
  %  \path [->,solid](10) edge node[right] {} (5);
 %   \path [->,solid](11) edge node[right] {} (6);
    
  %  \path [->,solid](13) edge node[right] {} (6);
    \path [->,solid, line width=1.0pt](7) edge node[right] {} (1);
\end{tikzpicture}
\end{center}
\caption{Note that \(h(\st_5)=\st_3\), \(h(\st_6)=\st_3\), \(h(\st_8)=\st_2\), \(h(\st_4)=\st_1\). \(\st_{5}\) and \(\st_{6}\) are merged with \(\st_3\), \(\st_{4}\) is merged with \(\st_1\), and \(\st_8\) is merged with \(\st_2\). Thus \(w(\st_1)=2, w(\st_2)=4\), \(w(\st_3)=6\), and \(w(\st_7)=3\). The rest of the vertices have unit weights. Note that we obtain \(G^{\prime}(A)\) from \(G(A)\) with all the vertices belonging to \(F\) and modified weight function \(w:F\to \mathbb{Z}^+\).}
\label{f:illustredugraph}
\end{figure}
We demonstrate in Fig.~\ref{f:figillustP} an optimal partition \(P\) corresponding to the graph \(G^{\prime}(A)\) depicted in Fig.~\ref{f:illustredugraph}. Fig.~\ref{f:figsolution} depicts the solution of \eqref{keyproblem1} for the digraph \(G(A)\) shown in Fig.~\ref{f:digraphsol}.
\begin{figure}[h!]
\begin{center}
   \begin{tikzpicture}
\node[draw=black,fill=orange!30] (1) at (0,0) [] {$\st_1$};
\node[draw=black,fill=orange!30] (2) at (2,-1.5) [] {$\st_2$};
\node[draw=black,fill=orange!30] (3) at (-2,-1.5) [] {$ \st_3 $};
%\node  (4) at (0,-1.5) [] {$ \st_4 $};
%\node[draw=black,fill=green!10]  (5) at (-3.5,-3) [] {$\st_5$};
%\node[draw=black,fill=green!10]  (6) at (-1.5,-3) [] {$\st_6$};
\node[draw=black,fill=orange!30] (7) at (0,-1.5) [] {$ \st_7 $};
%\node[draw=black,fill=green!10]  (8) at (3,-3) [] {$ \st_8 $};
\node[draw=black,fill=orange!30] (9) at (-3,-3) [] {$\st_9$};
%\node (10) at (-3.7,-6) [] {$v_{10}$};
%\node (11) at (-3,-6) [] {$ v_{11} $};
\node[draw=black,fill=orange!30] (12) at (-1.5,-3) [] {$ \st_{12} $};
%\node (13) at (-1.5,-6) [] {$v_{13}$};
%\node (14) at (-0.7,-6) [] {$v_{14}$};
%\node (15) at (0.7,-6) [] {$ v_{15} $};
%\node (16) at (5.5,-6) [] {$ \st_{16} $};
\node[draw=black,fill=orange!30] (17) at (2,-3) [] {$\st_{17}$};
%\node (18) at (3.5,-6) [] {$v_{18}$};
%    \path [->,solid](2) edge node[right] {} (1);
  %  \path [->,solid](3) edge node[right] {} (1);
    \path [->,solid, line width=1.0pt](4) edge node[right] {} (1);
  %  \path [->,solid](9) edge node[right] {} (3);
 %   \path [->,solid](8) edge node[right] {} (2);
  %  \path [->,solid](18) edge node[right] {} (8);
    \path [->,solid, line width=1.0pt](17) edge node[right] {} (2);
 %   \path [->,solid](16) edge node[right] {} (8);
   % \path [->,solid](15) edge node[right] {} (7);
  %  \path [->,solid](14) edge node[right] {} (7);
 %   \path [->,solid](6) edge node[right] {} (3);
  %  \path [->,solid](9) edge node[right] {} (5);
    \path [->,solid, line width=1.0pt](12) edge node[right] {} (3);
  %  \path [->,solid](10) edge node[right] {} (5);
 %   \path [->,solid](11) edge node[right] {} (6);
    
  %  \path [->,solid](13) edge node[right] {} (6);
    \path [->,solid, line width=1.0pt](7) edge node[right] {} (1);
\end{tikzpicture}
 \end{center}
\caption{Illustration of an optimal partition \(P\) of \(G^{\prime}(A)\) when the bound on the sum of vertex-weights of each subtree is \(\ell=7\) which is equal to the desired bound on the structural observability index, obtained by Step 3 in Algorithm \ref{algorithmdirectedtree}.}
\label{f:figillustP}
\end{figure}
\begin{figure}[h]
\begin{center}
   \begin{tikzpicture}
\node[draw=black,fill=orange!30] (1) at (0,0) [] {$\st_1$};
\node[draw=black,fill=orange!30] (2) at (2,-1.5) [] {$\st_2$};
\node[draw=black,fill=orange!30] (3) at (-2,-1.5) [] {$ \st_3 $};
\node  (4) at (0,-1.5) [] {$ \st_4 $};
\node (5) at (-3.5,-3) [] {$\st_5$};
\node (6) at (-1.5,-3) [] {$\st_6$};
\node[draw=black,fill=orange!30] (7) at (0,-3) [] {$ \st_7 $};
\node (8) at (3,-3) [] {$ \st_8 $};
\node[draw=black,fill=orange!30] (9) at (-4.5,-4.5) [] {$\st_9$};
\node (10) at (-2.8,-4.5) [] {$\st_{10}$};
\node (11) at (-2.2,-4.5) [] {$ \st_{11} $};
\node[draw=black,fill=orange!30] (12) at (-1.5,-4.5) [] {$ \st_{12} $};
\node (13) at (-0.8,-4.5) [] {$\st_{13}$};
\node (14) at (0,-4.5) [] {$\st_{14}$};
\node (15) at (1.0,-4.5) [] {$ \st_{15} $};
\node (16) at (4,-4.5) [] {$ \st_{16} $};
\node[draw=black,fill=orange!30] (17) at (3,-4.5) [] {$\st_{17}$};
\node (18) at (2,-4.5) [] {$\st_{18}$};
  %  \path [->,solid](2) edge node[right] {} (1);
   % \path [->,solid](3) edge node[right] {} (1);
    \path [->,solid, line width=1.0pt](4) edge node[right] {} (1);
    \path [->,solid, line width=1.0pt](5) edge node[right] {} (3);
    \path [->,solid, line width=1.0pt](8) edge node[right] {} (2);
    \path [->,solid, line width=1.0pt](18) edge node[right] {} (8);
    \path [->,solid, line width=1.0pt](17) edge node[right] {} (8);
    \path [->,solid, line width=1.0pt](16) edge node[right] {} (8);
    \path [->,solid, line width=1.0pt](15) edge node[right] {} (7);
    \path [->,solid, line width=1.0pt](14) edge node[right] {} (7);
    \path [->,solid, line width=1.0pt](6) edge node[right] {} (3);
   % \path [->,solid](9) edge node[right] {} (5);
    \path [->,solid, line width=1.0pt](12) edge node[right] {} (6);
    \path [->,solid, line width=1.0pt](10) edge node[right] {} (5);
    \path [->,solid, line width=1.0pt](11) edge node[right] {} (6);
    
    \path [->,solid, line width=1.0pt](13) edge node[right] {} (6);
    \path [->,solid, line width=1.0pt](7) edge node[right] {} (4);
\end{tikzpicture}
 \end{center}
\caption{Illustration of the partition \(P\) in the digraph \(G(A)\). Here \(J^*=\lbrace \st_1, \st_2, \st_3, \st_9\rbrace \subset F\) and solves \eqref{keyproblem1} for \(\ell=7\).}
\label{f:figsolution}
\end{figure}

%Since \((\mathcal P_1^\prime)\) is a special instance of \eqref{keyproblem1}, we can identify a solution of  \((\mathcal P_1^\prime)\) via Algorithm \ref{algorithmdirectedtree} by setting \(F=\mathcal{A}\). For example, a solution of \((\mathcal P_1^\prime)\) is \(J^*=\lbrace \st_1, \st_2, \st_4, \st_6 \rbrace\) such that \(\mu(A, I_{J^*})\leq 7\).
%\begin{remark} Considerable research has been carried out related to the partitioning of  trees into subtrees where each subtree needs to satisfy some constraints; see \cite{ref:Luk-74},\cite{ref:ItoGotZhoNis-06},\cite{ref:ItoNisScrUnoZho-12} and the references therein. A problem of obtaining a partition such that the total vertex weight of each subtree is at most \(\ell\) and the sum of edge weights between the subtrees is minimum when both the edges and vertices are weighted is solved in \cite{ref:Luk-74} via an \(O(\ell^2 d)\) algorithm. However, in the context of our problem it is enough to use the \(O(d)\) algorithm of \cite{ref:KunMis-77} for the special case of assigning unit weight to all the edges in the graph. Recently the problem of partitioning a weighted tree into subtrees with weights within a given fixed range is dealt in \cite{ref:ItoGotZhoNis-06,ref:ItoNisScrUnoZho-12}. %Each vertex is assigned a non-negative weight. The objective is to  find a partition into subtrees by deleting edges so that the total weight of each subtree is at least \(k\) and at most \(n\), where the integers \(0\leq k\leq n\) are pre-specified.
%\end{remark}
%\vspace*{1 cm}
\section{Cardinality constrained sensor placement}
\label{s:cardinality constrained sensor placement}
Suppose \(A\in \lbrace 0, \ast \rbrace^{d\times d}\) and \(C\in \lbrace  0, \ast \rbrace^{p\times d}\) are given. Recall \(\mathcal{A}=\lbrace \st_1, \st_2, \ldots, \st_d\rbrace\) and \(\mathcal{C}=\lbrace 1, 2, \ldots, p\rbrace\) are the sets of state and output vertices of the digraph \(G(A,C)\) respectively. Without loss of generality, we assume that each output measures at least one state. For \(S \subset \mathcal{C}\), \(C(S)\) is a submatrix obtained by retaining the rows corresponding to the output vertices in \(S\). A set of state vertices \(\mathcal{A}^{\prime} \subset \mathcal{A}\) in \(G(A)\) is said to be \emph{structurally observable by \(S\)} if the induced subgraph \(G(A^{\prime}, C^{\prime}(S))=(\mathcal{A}^{\prime}\sqcup S, E_{A^{\prime}}\sqcup E^{\prime}_{S})\) of \(G(A,C)\) by \(\mathcal{A}^{\prime}\sqcup S\) satisfies both the conditions given in Theorem \ref{t:observability}(b), where \(E_{A^{\prime}}\) contains the edges between the state vertices in \(\mathcal{A}^{\prime}\) and \(E^{\prime}_{S}\subset E_C\) contains only the edges between \(\mathcal{A}^{\prime}\) and \(S\) and \(A^{\prime}\in \lbrace 0, \ast \rbrace^{\abs{\mathcal{A}^{\prime}}\times \abs{{\mathcal{A}^{\prime}}}}\), \(C^{\prime}(S)\in \lbrace 0, \ast\rbrace^{\abs{S}\times \abs{\mathcal{A}^{\prime}}}\). In other words, \((A^{\prime}, C^{\prime}(S))\) is structurally observable. 

We address the problem of selecting a set \(S\) of output vertices of cardinality at most \(r\) \((1\leq r \leq p)\) such that maximum number of state vertices are structurally observable by \(S\) in the digraph \(G(A)\). For \(S\subset \mathcal{C}\), we define \(\subfun: 2^{\mathcal{C}} \to \Real\) by
%\begin{equation}
%\label{e:definec(S)1}
%\subfun(S)\Let \max\Big\lbrace \abs{\mathcal{A^{\prime}}}\;\Big|\; \mathcal{A^{\prime}}\subset \mathcal{A}\;\\ \& \; (A^{\prime}, C^{\prime}(S))\; \text{is structurally observable}\Big\rbrace.
%\end{equation}
\begin{multline}
\label{e:definec(S)1}
\subfun(S)\Let \max\Big\lbrace \abs{\mathcal{A^{\prime}}}\;\Big|\; \mathcal{A^{\prime}}\subset \mathcal{A}\; \text{and}\; (A^{\prime}, C^{\prime}(S))\; \text{is structurally observable}\Big\rbrace.
\end{multline}
In simple words, \(\subfun(S)\) is the size of the largest subgraph of \(G(A)\) that is structurally observable by \(S\). The value of \(\frac{\subfun(S)}{d}\), clearly, lies in the interval \([0,1]\); the fraction \(\frac{\subfun(S)}{d}\) takes the value \(1\) if the set of all state variables of \(G(A)\) is structurally observable by \(S\), and \(0\) if no state variable is observable by \(S\). Based on \eqref{e:definec(S)1}, given \(A\in \lbrace 0, \ast \rbrace^{d\times d}\) and \(C\in \lbrace  0, \ast \rbrace^{p\times d}\), we have the problem of selecting up to \(r\) (\(1\leq r\leq p\)) output vertices so as maximize the following function:

\begin{equation}
\begin{aligned}
\label{e:submodularC(S)output}
&\maximize_{S\subset \mathcal{C}} && \frac{\subfun(S)}{d}\\ 
& \sbjto && \abs{S}\leq r.
\end{aligned}
\tag{\(\mathcal P_2\)}
\end{equation}
 
Next, we prove that \eqref{e:submodularC(S)output} is NP-hard by reducing to \eqref{e:submodularC(S)output} from the \emph{maximum cover problem}, the latter being a well-known NP-hard problem \cite{ref:Hoc-96}.
\begin{theorem}
\label{t:p3NPhard}
\eqref{e:submodularC(S)output} is NP-hard.
\end{theorem}
\begin{proof}
Consider the maximum cover problem: Given a positive integer \(r\) and a collection of sets in \(K=\lbrace Z_1, Z_2, \ldots, Z_p\rbrace\) such that each \(Z_i\) contains some elements, find a subset \(\hat{K}\subset K\) of sets such that \(\abs{\hat{K}}\leq r\) and the number of covered elements \(\abs{\bigcup_{\substack{Z_i\in \hat{K}}}Z_i}\) is maximized. Let \(\bigcup_{\substack{Z_i\in K}} Z_i=\lbrace s_j\rbrace_{j=1}^d\).

To prove the NP-hardness, we build an instance of \eqref{e:submodularC(S)output} starting from an instance of the maximum cover problem in polynomial time. Each element \(s_j\) is associated with a state vertex \(x_j\) and each set \(Z_i\) is associated to an output vertex \(i\) giving  the output set \(\mathcal{C}=\lbrace 1,2,\ldots, p\rbrace\). The corresponding state and output matrices are: \(A=I_d\in \lbrace 0, \ast \rbrace^{d\times d}\) as defined earlier in \S\ref{s:Minimal sensor placement problem} and \(C\in \lbrace 0, \ast\rbrace^{p\times d}\) with \(C_{ij}=\ast \;\text{if}\; s_j\in Z_i\), and \(0\) otherwise, for \(i\in[p]\), \(j\in[d]\). Each state vertex has a self-loop in \(G(A, C)\). %The aforementioned reduction has polynomial complexity.

For a given integer \(m\leq d\), we prove that there is a set \(\hat{S}\subset \mathcal{C}\) of size at most \(r\) such that \(\Xi(\hat{S})\geq m\) if and only if there exists a \(\hat{K} \subset K\) of cardinality at most \(r\) such that \(\abs{\bigcup_{\substack{Z_i\in \hat{K}}}Z_i}\geq m\). Let \(\hat{S}\subset \mathcal{C}\) with \(\abs{\hat{S}}\leq r\) and  \(\Xi(\hat{S})\geq m\). We will show that \(\hat{K}=\big\lbrace Z_i\;\big |\; i\in \hat{S} \big\rbrace\) is a feasible solution of the maximum cover problem with \(\abs{\bigcup_{\substack{Z_i\in \hat{K}}}Z_i}\geq m\). Clearly, \(\abs{\hat{K}}\leq r\) as \(\abs{\hat{S}}\leq r\). Since every state vertex has a self-loop in \(G(A)\), \(\Xi({\hat{S}})\) is equal to the number of state vertices having a directed path (which in this case is an edge) to at least one of the output vertices in \(\hat{S}\). Hence, by construction, it is easy to see that \(\Xi(\hat{S})= \abs{\bigcup_{\substack{Z_i\in \hat{K}}}Z_i}\). Thus, \(\Xi(\hat{S})\geq m\) implies that \(\abs{\bigcup_{\substack{Z_i\in \hat{K}}}Z_i}\geq m\). Conversely, if \(\hat{K}\) is such that \(\abs{\hat{K}}\leq r\) and \(\abs{\bigcup_{\substack{Z_i\in \hat{K}}}Z_i}\geq m\) then \(\hat{S}=\big\lbrace i\;\big |\; Z_i\in \hat{K} \big\rbrace\) satisfies \(\abs{\hat{S}}\leq r\) and  \(\Xi(\hat{S})= \abs{\bigcup_{\substack{Z_i\in \hat{K}}}Z_i}\geq m\), and thereby completes the proof.
 \end{proof}
In fact, Theorem \ref{t:p3NPhard} implies that \eqref{e:submodularC(S)output} is NP-hard even when every state vertex has a self-loop in \(G(A)\) since the constructed instance in the proof of Theorem \ref{t:p3NPhard} belongs to this subclass. This motivates us to impose the following condition on the digraph \(G(A)\).
\begin{assumption}
\label{a:selfloop}
It is assumed that each state vertex has a self-loop in the graph \(G(A)\). Consequently, the resultant \(G(A, C)\) has no contraction.
\end{assumption}
%The preceding is, indeed a reasonable assumption since a large class of systems, for instance systems including epidemic dynamics  \cite{ref:NowPrePap-16}, power grids \cite{ref:IliXieKhaMou-10}, social network \cite{ref:DooRabKha-18}, etc., exhibit this feature.
 Given \(S\subset \mathcal{C}\), we say that a state vertex \(\st_i \in \mathcal{A}\) is \emph{accessible} by \(S\) if there exists a directed path from it to at least one of the output vertices in \(S\). Under Assumption \ref{a:selfloop}, \(\subfun(S)\) is equal to the number of state vertices accessible by \(S\) and leads to the following lemma that proves the submodularity of \(\subfun(\cdot)\).% is a submodular function.
\begin{lemma}
\label{l:submodularc(S)}
Consider the system \eqref{e:linsys} and the associated sets \(\mathcal{A}\) and \(\mathcal{C}\) as defined in \S\ref{s:Preliminaries}. Suppose Assumption \ref{a:selfloop} holds. Then \(\subfun:2^{\mathcal{C}} \to \Real\) defined in \eqref{e:definec(S)1} is a monotone non-decreasing submodular function.
\end{lemma}
\begin{proof}
By the definition of \(\subfun(\cdot)\) it is clear that for \(S \subset T\), \(\subfun(S)\leq \subfun(T)\) when Assumption \ref{a:selfloop} holds. Therefore, \(\subfun(\cdot)\) is a monotone non-decreasing  function.

Let \(S\subset T\subset \mathcal{C}\), and suppose that \(v\notin T\).  To show that \(\subfun(\cdot)\) is submodular it is enough to establish that \(\subfun(S \cup \lbrace v \rbrace)-\subfun(S)\geq \subfun(T \cup \lbrace v \rbrace)-\subfun(T)\).

Let \(m\) be the number of state vertices accessible by both \(S\) and \(v\). Similarly, let \(n\) be the number of state vertices accessible by both \(T\) and \(v\). We have
\begin{equation}
\label{e:sub1}
\begin{aligned}
\subfun(S \cup \lbrace v \rbrace)&= \subfun(S) + \subfun(v)- m,\\
\subfun(T \cup \lbrace v \rbrace)&= \subfun(T) + \subfun(v)-n.
\end{aligned}
\end{equation}

Since \(S \subset T\), we have \(m \leq n\). Consequently,
\begin{equation}
\label{e:sub2}
\subfun(v)-m \geq \subfun(v)-n.
\end{equation} 
Using \eqref{e:sub1} and \eqref{e:sub2} it follows that \(\subfun(\cdot)\) is  submodular. \qed
\end{proof}
Although maximizing a submodular function \(\subfun(\cdot)\) is an NP-hard problem, there exists efficient greedy algorithms for providing an approximate solution of \eqref{e:submodularC(S)output} \cite{ref:NemWolFis-78}, and we employ one such mechanism in Algorithm \ref{Algoobservabilitysubodularoutput} that follows: %Arguments based on submodularity  are used across multiple disciplines, for instance, economics, circuit theory, operation research, and machine learning \cite{ref:Edm-71}, \cite{ref:Sch-00}, \cite{ref:KraSinGue-08}, just to name a few.%As a consequence of Lemma \ref{l:submodularc(S)}, \(\subfun^{\prime}(\cdot)\) is a submodular function, leads to efficient greedy algorithm for solving \eqref{e:submodularC(S)output}. %The set of output vertices is initialised to \(S_0^*=\emptyset\). The vertex \(s_i^*\in V_{A}\) that maximises \(c(S_{i-1}^*\cup \lbrace s_i^*\rbrace)-c(s_i^*)\) is added at the \(i\)th iteration. The algorithm terminated after \(l\) iterations, where \(l\) is the maximum number of output vertices allowed.  An approximate algorithm is given below: 
\begin{algorithm}[h]
	\KwIn{\(G(A, C)\) and maximum number of outputs \(r\)}
\KwOut{A set \(S^*\subset \mathcal{C}\)}
 \nl Initialization: \(S^*=\emptyset\), \(i \leftarrow 0\)
 
 \nl \textbf{while} \(i < r\) \textbf{do} 
 
 \nl  \(s^* \leftarrow \argmax_{s\in \mathcal{C}\setminus S^{*}} \subfun(S^*\cup \lbrace s \rbrace)-\subfun(S^*)\)
 
 \nl \(S^*\leftarrow S^* \cup \lbrace s^* \rbrace\)
 
 \nl \(i\leftarrow i+1\)
 
 \nl if \(\subfun(S^{*}) = d\)
 
 \quad stop
 
 \nl else
 
 \quad go to step \(2\)
 
 \nl end
 
 \nl \textbf{end while}  
 
\nl return \(S^*\); exit
 
\caption{Approximation algorithm for solving \eqref{e:submodularC(S)output}}
\label{Algoobservabilitysubodularoutput}
\end{algorithm}

The next theorem gives a qualitative estimate of how close a solution obtained by Algorithm \ref{Algoobservabilitysubodularoutput}, is from an optimal solution of \eqref{e:submodularC(S)output}:
\begin{theorem}
\label{t:maximise c(S)}
Let \(\hat{S}\) be an optimal solution of \eqref{e:submodularC(S)output}, and let \(S^*\) be a set returned by Algorithm \ref{Algoobservabilitysubodularoutput}. Then
\begin{equation}
\frac{\subfun(S^*)}{d}\geq \bigg(1-\frac{1}{e}\bigg)\frac{\subfun(\hat{S})}{d}.
\end{equation}
\end{theorem}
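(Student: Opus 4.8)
The plan is to reduce the statement to a direct application of Theorem~\ref{t:Second}, the classical Nemhauser--Wolsey guarantee for greedy maximization of a monotone non-decreasing submodular function under a cardinality constraint. The only genuine work is to verify that the function \(\subfun^{\prime}(\cdot)\) defined in \eqref{e:definec(S)1} satisfies the hypotheses required by that theorem, namely that it is monotone non-decreasing and submodular on \(2^{\mathcal{C}}\). Once that is in hand, the inequality follows by instantiating Theorem~\ref{t:Second} with \(h=\subfun^{\prime}\) and then dividing both sides by \(d\).

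First I would observe that \(\subfun^{\prime}\) is the analogue of \(\subfun\) from \eqref{e:definec(S)}, the only difference being that the output-sets are now indexed up to iteration \(d\) rather than \(\ell\); that is, \(Z(S)=\lbrace \mathcal{Y}_i(k)\rbrace_{i\in S, k\in [d]}\). The proof of Theorem~\ref{l:submodularc(S)} nowhere uses the particular value of the iteration bound: it relies only on the combinatorial structure of maximum matchings in the bipartite graph \(\mathcal{B}(\mathcal{A}\sqcup Z(S),\mathcal{E})\), together with Observation~\ref{o:submodular}. Hence the identical argument, verbatim with \(\ell\) replaced by \(d\), shows that \(\subfun^{\prime}\) is monotone non-decreasing and submodular. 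I would state this explicitly rather than merely assert it, since the reader needs to see that the matching-restriction lemma transfers unchanged.

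With submodularity and monotonicity established, I would apply Theorem~\ref{t:Second} to the problem \(\maximize_{\abs{S}\le r}\subfun^{\prime}(S)\), whose optimal value is \(\subfun^{\prime}(\hat{S})\) and whose greedy output via Algorithm~\ref{Algoobservabilitysubodularoutput} is \(S^{*}\). Note that Algorithm~\ref{Algoobservabilitysubodularoutput} is precisely Algorithm~\ref{a:Second} augmented with an early-stopping check (steps~6--8): it halts as soon as \(\subfun^{\prime}(S^{*})=d\). Since \(d\) is the maximum attainable value of \(\subfun^{\prime}\) (the matching saturates all of \(\mathcal{A}\)), this early termination can only happen when the greedy objective has already reached the global maximum, so it does not weaken the approximation guarantee; the set returned still satisfies \(\subfun^{\prime}(S^{*})\ge (1-1/e)\,\subfun^{\prime}(\hat{S})\). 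Dividing through by the positive constant \(d\) yields
\[
\frac{\subfun^{\prime}(S^{*})}{d}\ge \Bigl(1-\frac{1}{e}\Bigr)\frac{\subfun^{\prime}(\hat{S})}{d},
\]
which is the claim.

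The main obstacle, such as it is, lies not in any deep argument but in justifying that the early-stopping modification in Algorithm~\ref{Algoobservabilitysubodularoutput} preserves the guarantee of Theorem~\ref{t:Second}. One must argue carefully that stopping when \(\subfun^{\prime}(S^{*})=d\) is harmless: because the constraint in \eqref{e:submodularC(S)output} caps \(\abs{S}\le r\) and because no feasible set can achieve a value exceeding \(d\), the early halt occurs only at an optimum, so the greedy bound is not merely preserved but in fact met with equality in that case. I would therefore spend a sentence confirming that \(\subfun^{\prime}(\hat{S})\le d\) and that the truncated greedy run coincides with the first \(\abs{S^{*}}\le r\) steps of the untruncated one, so Theorem~\ref{t:Second} applies directly.
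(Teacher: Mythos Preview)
Your proposal is correct and follows essentially the same approach as the paper, which simply invokes Theorem~\ref{t:Second} once submodularity and monotonicity of \(\subfun^{\prime}\) are in hand. In fact you are more thorough than the paper: the published proof neither explicitly re-verifies submodularity of \(\subfun^{\prime}\) (treating it as immediate from Theorem~\ref{l:submodularc(S)} with \(\ell=d\)) nor comments on the early-stopping clause in Algorithm~\ref{Algoobservabilitysubodularoutput}, both of which you handle carefully.
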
 
\begin{proof}
\cite[Chapter III, Section 3.9, Theorem 9.3]{ref:NemWol-99} asserts that for any  monotone non-decreasing submodular function \(\subfun(\cdot)\), the greedy Algorithm \ref{Algoobservabilitysubodularoutput} returns a set satisfying \(\subfun(S^*)\geq (1-1/e )\subfun(\hat{S})\), where \(\subfun(\hat{S}) \Let \max\Big\lbrace \subfun(S)\,\Big{|}\, \abs{S}\leq r\Big\rbrace\). The assertion follows. 
\end{proof}
 Algorithm \ref{Algoobservabilitysubodularoutput} is a greedy method that progressively picks an output vertex from \(\mathcal{C}\) and adds it to \(S^{*}\) so that the maximal increase of \(\subfun(\cdot)\) is obtained at each iteration. Observe that at most \(r\) indices may be added to \(S^{*}\) (See Step 2). In Step 3 the algorithm loops over at most \(p\) indices and checks their possible contributions to increase \(\subfun(\cdot)\) when an element of \(\mathcal{C}\) is added to \(S^{*}\). Therefore, any operation in the algorithm will be at most \(rp\) times. Given \(S^{*}\), we also need to compute \(\subfun(S^{*})\) in Step 3. Under Assumption \ref{a:selfloop}, this is equivalent to computing the total number of state vertices accessible by \(S^{*} \subset \mathcal{C}\). This set of state vertices is obtained by employing depth-first search which has \(O(d^2+dr)\) complexity \cite{ref:CorLeiRivSte-09}. Therefore, the overall complexity of Algorithm \ref{Algoobservabilitysubodularoutput} is \(O(rp(d^2+dr))\). In practice, greedy-type Algorithm \ref{Algoobservabilitysubodularoutput} for submodular maximization often outperform their worst-case theoretical approximation guarantees.
 \subsection{Illustrative example}
We demonstrate our result established in \S\ref{s:cardinality constrained sensor placement} on the benchmark electrical power grid, the IEEE 118-bus system. It consists of \(118\) buses, \(53\) power generators, and \(65\) power loads, connected to each other through network interconnections. A cyber-physical model of the generators and the loads proposed in \cite{ref:IliXieKhaMou-10} is adopted, where a Taylor linearization is performed at the nominal operating point to obtain a linear system. The obtained linear system \(G(A)\) has total number of state vertices equal to \(407\) and the total number of edges between them is \(920\). Table \ref{table:desription} describes the state variables of generators and loads of the IEEE \(118\)-bus system. %the variables of interest, i.e.,

\begin{table}[htbp]
\caption{State variables of the generators and loads}
\centering
 \begin{tabular}{c|l}
State variable & \multicolumn{1}{|c}{Description}\\
 \hline
 \(P_{T_G}\) & mechanical power of turbine\\
 \(P_G\) & electrical power of generator\\
 \(w_G\) & generator's output frequency\\
 \(a_G\) & valve opening of generator \\
 \hline
 \(P_L\) & electrical power delivered to load\\
 \(w_L\) & frequency measured at load\\
 \(I_L\) & real power consumed by load\\
\hline
 \end{tabular}
 \label{table:desription}
\end{table}

Assume that a transmission line \((i,j)\) exists between the generator \(i\) and load \(j\), and is represented by a digraph shown in Fig.~\ref{f:genload}. The frequency component \(w_{L_j}\) of bus \(j\) influences the dynamics of the power component \(P_{G_i}\) of bus \(i\) and vice-versa. This shows that we have outgoing edges from the frequencies into the powers of the components in the neighbouring buses.
\tikzset{middlearrow/.style={
        decoration={markings,
            mark= at position 0.5 with {\arrow{#1}} ,
        },
        postaction={decorate}
    }
} 
\begin{figure}[h!]
\begin{center}
\resizebox{1.8 in}{1.5 in}{
\begin{tikzpicture}
\node[] (0) at (0,0) {\(a_{G_i}\)};
\node[] (1) at (1.5, 0) {\(\omega_{G_i}\)};
 \node[] (2) at (3,0) {\(P_{G_i}\)};
 \node[] (3) at (3,-1.5) {\(P_{L_j}\)};
 \node[] (4) at (1.5,-1.5) {\(\omega_{L_j}\)};
 \node[] (5) at (0,-1.5) {\(I_{L_j}\)};
  \node[] (6) at (0.7,0.7) {\(P_{T_{G_i}}\)};
% \path [->](1) edge node[right] {} (3);
  \draw (1) edge[->,draw=red, line width=1.0pt] (3) (4) edge[->,draw=red, line width=1.0pt] (2) (5) edge[->,draw=blue, line width=1.0pt] (4); 
      \draw[bend left,middlearrow={>},draw=blue, line width=1.0pt] (0) to node [auto] {} (1);
   \draw[bend left,middlearrow={>},draw=blue, line width=1.0pt] (1) to node [auto] {} (0);
   \draw[bend left,middlearrow={>},draw=blue, line width=1.0pt] (1) to node [auto] {} (2);
  \draw[bend left,middlearrow={>},draw=blue, line width=1.0pt] (2) to node [auto] {} (1);
  \draw[bend left,middlearrow={>},draw=blue, line width=1.0pt] (3) to node [auto] {} (4);
  
   \draw[bend left,middlearrow={>},draw=blue, line width=1.0pt] (4) to node [auto] {} (3);
   
   \draw[bend left,middlearrow={>},draw=blue, line width=1.0pt] (0) to node [auto] {}
    (6);
    \draw[bend left,middlearrow={>},draw=blue, line width=1.0pt] (6) to node [auto] {} (1);
  \path(1) edge [loop below, line width=1.0pt] node {} (1);
    \path(2) edge [loop above, line width=1.0pt] node {} (2);
  \path(0) edge [loop left, line width=1.0pt] node {} (0);
  \path(4) edge [loop below, line width=1.0pt] node {} (4);
  \path(3) edge [loop below, line width=1.0pt] node {} (3);
  \path(5) edge [loop left, line width=1.0pt] node {} (5);
   \path(6) edge [loop above, line width=1.0pt] node {} (6);
 % \draw[bend left,middlearrow={>}]   (5) to node [auto] {} (4);
  %\draw[bend left,middlearrow={>}]   (1) to node [auto] {} (4);
   %\draw[bend left,middlearrow={>}]   (4) to node [auto] {} (3);

    \end{tikzpicture}}
   \end{center}
   \caption{Illustration of the digraph representation of generator \(i\) connected to load \(j\) through a transmission line \((i, j)\). }
\label{f:genload}
\end{figure}

According to the construction shown in Fig.~\ref{f:genload}, each state vertex of the generators and the loads has a self-loop. Therefore, Assumption \ref{a:selfloop} is valid for \(G(A)\). We consider the given output matrix \(C=I_d\), where \(d=407\). We provide an approximate solution of the cardinality constrained sensor placement  Problem \eqref{e:submodularC(S)output} by employing Algorithm \ref{Algoobservabilitysubodularoutput}. Fig.~\ref{f:plot_states_output} depicts the variation in \(\Xi(\cdot)\) as the permissible number of outputs changes. If the number of  permitted outputs is small, then the maximum size of the set of states structurally observable is small, which is natural. However, beyond  a certain threshold of the permissible number of outputs, in this case \(14\), the set of all the vertices in \(G(A)\) become structurally observable. 
 
%(Need to add the plots)
\begin{figure}[h!]
\centering
\includegraphics[width = 3.5 in, height=2 in]{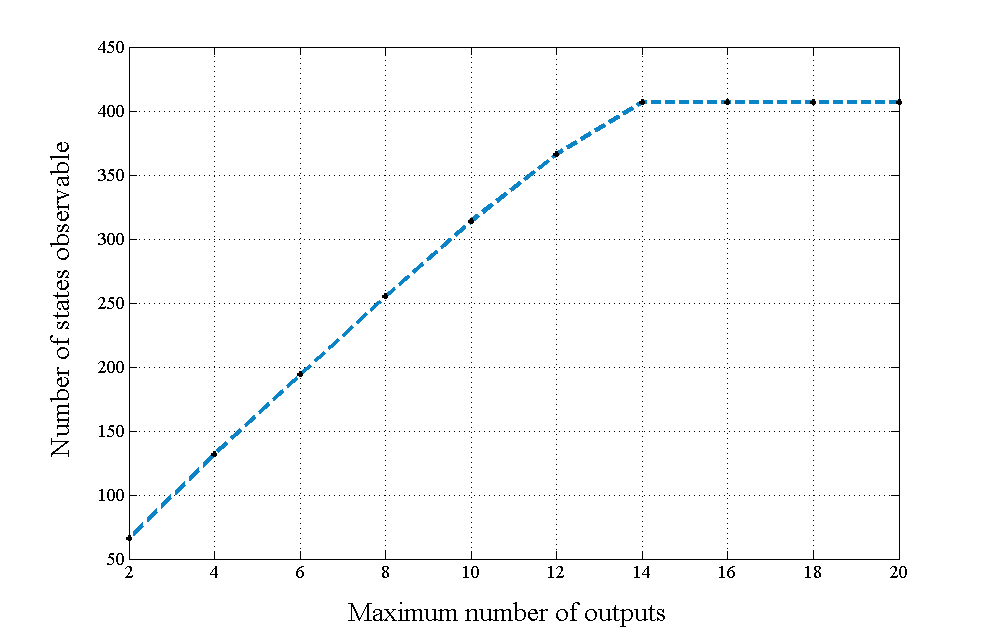}
\caption{Illustration of the change in the cardinality of the set of states structurally observable against the permissible number of outputs.}
\label{f:plot_states_output}
\end{figure}
\section{Concluding remarks}
\label{s:conclusion}
In this article we have addressed two problems related to optimal sensor placement in linear systems: the minimal sensor placement problem and the cardinality constrained sensor placement problem.

	\begin{itemize}[label=\(\circ\), leftmargin=*]
		\item We produced an efficient polynomial time algorithm to solve the minimal sensor placement problem when the desired bound on the structural observability index is \(2\). 
		\item We have demonstrated an interesting transition in the hardness of the minimal sensor placement problem as the desired bound changes from \(2\) to \(3\).
		\item The NP-hardness of the minimal sensor placement problem does not preclude the existence of classes of systems for which it is possible to determine solutions efficiently. In fact, we provided a linear time algorithm to solve this problem under a mild assumption that the system structure is a directed tree with self-loop at each state vertex.
		\item We proved that the cardinality constrained placement problem is a hard combinatorial optimization problem and remains computationally difficult for self-damped systems. We employed a simple greedy algorithm to find an \((1-\frac{1}{e})\)-approximate solution of this problem for self-damped systems.
	\end{itemize}

	By standard duality arguments, all our results have analogous counterparts and interpretations for controllability and actuator placement. Future work involves determining other interesting subclasses where the current problems can be solved efficiently, and identifying vulnerable connections between the states whose deletion leads to sudden jumps in the observability index of the system.%
\bibliographystyle{alpha}
\bibliography{ref}

%%% ref.bib
%%%%%%%%%%

\bigskip
\bigskip
\end{document}